\documentclass[12pt]{article}

\usepackage{tikz}

\usepackage{tikz-cd}

\usepackage{authblk}

\def\epsilon{\varepsilon}
\def\bbm{\begin{bmatrix}}
\def\ebm{\end{bmatrix}}

\usepackage{MathSettingsBen}
\usepackage{quiver}

\newcommand{\bn}{{\underline{n}}}
\newcommand{\bl}{{\underline{l}}}
\newcommand{\bk}{{\underline{k}}}

\newcommand{\FinSet}{\cat{FinSet}}
\renewcommand{\M}{\Pi}
\newcommand{\Md}{{\M^{\op}}}
\newcommand{\g}{\gamma}
\newcommand{\G}{\Gamma}
\newcommand{\Gd}{{\G^{\op}}}
\newcommand{\cyclic}{\Lambda}
\newcommand{\simpl}{\D}
\newcommand{\intvl}{\nabla}

\newcommand{\p}{p}
\newcommand{\triv}{triv}
\renewcommand{\Eq}{\text{Eq}}
\DeclareMathAlphabet{\mathpzc}{OT1}{pzc}{m}{it}

\title{Berger-Joyal duality and traces I}
\begin{document}
\author[1]{Nicholas Cecil\thanks{nicholas-cecil\char 64 uiowa.edu}}
\author[2]{Benjamin Cooper\thanks{ben-cooper\char 64 uiowa.edu}}
\affil[1,2]{University of Iowa, Department of Mathematics}

 \maketitle

\begin{abstract}
We give a new proof that the opposite of Joyal's disk category $\cal{D}_n$
is Berger's wreath product category $\Theta_n = \simpl\wr\cdots\wr\simpl$.
Our techniques continue to apply when the simplex category $\simpl$ is
replaced by Connes' cyclic category $\cyclic$ and some other crossed simplicial groups.
\end{abstract}

\section{Introduction}\label{sec:intro}

Joyal introduced finite combinatorial $n$-disks in his study of higher
categories \cite{JoyalCombDisks}.  These $n$-disks fit together to determine
a category $\cal{D}_n$ which he used to introduce a notion of weak
$n$-categories.  In this setting, the image of the Yoneda embedding
$\cal{D}_n^\op \hookrightarrow \fun(\cal{D}_n, \cat{Set})$ is a collection of $n$-categories. 
Berger found an inductive definition $\Theta_n$ for this category and
produced equivalences
\begin{equation}\label{eqn:bergerjoyal}
\Theta_n\simeq\cal{D}_n^\op,
\end{equation}
see \cite{BergerLoopSpace}.  We call Eqn. \eqref{eqn:bergerjoyal} Berger-Joyal duality.

The inductive definition of $\Theta_n$ is written in terms of a wreath product operation
\begin{align*}
    (-)\wr (-):\cat{Cat}_{/\G}\times \cat{Cat}\to \cat{Cat}
\end{align*}
where $\cat{Cat}_{/\G}$ is the category of pairs $C_\gamma := (C, \gamma)$ where $C$ is a small category and $\gamma : C\to \G$ is a functor from $C$ to Segal's category $\G$ (see Def. \ref{def:segal}).
In terms of this operation, Berger defined $\Theta_n$ recursively according to the formulas
\begin{align*}
  \Theta_1 &:= \simpl\\
  \Theta_n &:= \simpl\wr \Theta_{n-1}  \quad\quad\textnormal{ for }\quad n>1
  \end{align*}
where $\simpl$ is the simplex category.

Since the wreath product is functorial, it is easy to study the outputs of
$A\wr X$ as $A$ and $X$ vary.  The same is not true for $n$-disks.  This
makes direct generalizations of Berger-Joyal duality to other settings a
tricky business.  For example, in order to replace the simplicial category
$\simpl$ by Connes' cyclic category $\cyclic$ one needs to intuit the
cyclic analogue of an $n$-disk; while possible, this kind of ingenuity does
not represent an extensible solution.

Theorem \ref{thm:bjduality} contains a functorial construction of Berger-Joyal duality.
In order to solve this problem, we introduce a generalized wreath product
operation inspired by Kelly's theory of clubs, see Insp. \ref{insp:kelly}.  If $C$ is a 2-category which has pullbacks and a terminal object $*$ and $T : C \to C$ is a functor then there is a $T$-wreath product 2-functor
\begin{align*}
    \otimes^T : C_{/T(*)}\times C\to C
\end{align*}
see Def. \ref{def:genwrprod}. If $C$ is the category $\cat{Cat}$ of small categories then
we introduce the pair of functors
$$\M : \cat{Cat}\to \cat{Cat} \quad\quad\textnormal{ and }\quad\quad \Md :\cat{Cat}\to \cat{Cat}$$ 
in Def. \ref{def:pi} and Def. \ref{def:sigma}. By construction, the $\M$-wreath product $\otimes^{\M}$ agrees with Berger's wreath product: 
$$(-)\otimes^{\M}(-) \cong (-)\wr (-).$$
The functor $\Md$ is defined to be the functor $\M$ conjugated by the involution $-^{\op}$ of $\cat{Cat}$ and Thm. \ref{thm:diskaswreath} shows that the $\Md$-wreath product satisfies
$$\intvl\otimes^\Md\cal{D}_n\simeq \cal{D}_{n+1} \quad\quad\textnormal{ for }\quad n\geq 1$$
where $\intvl\cong\simpl^\op$ is the interval category. Prop. \ref{prop:Duality} shows that the dual pair, $\M$ and $\Md$, of functors determines an isomorphism between the associated dual pair of wreath products:
$$\left((-)\wr(-)\right)^\op \cong (-)^\op \otimes^\Md (-)^\op.$$
These ideas combine in Thm. \ref{thm:bjduality} to give a functorial proof of Berger-Joyal duality. The base case is $\cal{D}_1\simeq \simpl^{\op}$.  Now, assuming $\Theta_n^\op\simeq\cal{D}_n$,
\begin{align*}
    \Theta_{n+1}^\op &=(\simpl \wr \Theta_n)^\op \\
&=(\simpl \otimes^\M \Theta_n)^\op\\
    &\cong \simpl^\op \otimes^\Md \Theta_n^\op \\
    &\simeq \intvl \otimes^\Md \cal{D}_n \\
    &\simeq \cal{D}_{n+1}.
\end{align*}

The remainder of the paper contains a few steps towards applying this duality theorem to the study of $n$-categories. 
Our motivation is to use this construction to produce operations on higher categories.
Roughly speaking, $n$-categories are presheaves on $\Theta_n$ with some additional structure and, as mentioned above, our version of Berger-Joyal duality allows us to replace a copy of $\Delta$ in the definition of $\Theta_n$ with a category $C$. When there is a good choice $j : \Delta \to C$ then our definition admits induction-restriction functors 
$$PSh(\Delta\wr \cdots \wr \Delta \wr \cdots \wr \Delta) \leftrightarrows PSh(\Delta\wr \cdots \wr C \wr \cdots \wr \Delta).$$
In good cases, these functors can be used to produce operations on higher categories. 

A wreath product involving non-standard $C$ depends on a choice of functor $C\to \Gamma$ and \S\ref{sec:chardefsegal} contains a study of these functors. In Berger's work, the functor $\Delta \to \Gamma$ keeps track of the edges in the ordered sets $[n]=\{0<1<2<\cdots<n\}$, see Def. \ref{def:segalmap}, but, as we will see, there are many other examples. Theorem \ref{thm:sievesondelta} contains a classification of functors $\Delta \to \Gamma$. As a corollary Berger's functor is characterized as extremal. In Section \ref{sec:segalcss}, we consider the special case in which $C:=\Delta G$ is a crossed simplicial group in the sense of Loday and Fiedorowicz \cite{FL}.

In our last section we examine the important special case of $C:=\Lambda$ Connes' cyclic category with preparation towards the sequel \cite{CecilCooper2} in which we will introduce a family of trace operations on higher categories.

 \bigskip

\noindent

\textbf{Notation.}  We use the symbols $=$ for equality, $\cong$ for isomorphism in an ambient category and $\simeq$ for equivalence in an ambient 2-category.     If $\gamma \in \cal{C}_{/X}$ is an object of the slice category then the functor $\gamma : A\to X$ will be denoted by $A_\gamma$.

\textbf{Acknowledgments.} Nicholas Cecil was supported by the Erwin and Peggy Kleinfeld Fellowship and NSF RTG DMS-2038103 grant.

\section{Basic definitions}\label{sec:review}

Here we recall the simplex category $\Delta$, the interval category $\nabla$, Segal's category of finite sets $\Gamma$ and Connes' cyclic category $\Lambda$.  Then in \S\ref{sec:finitendisks}, Joyal's categories $\cal{D}_n$ of finite combinatorial $n$-disks are reviewed.

\begin{defn}{$(\simpl,\intvl)$}\label{def:simplexinterval}
    The {\em simplex category} $\simpl$ is the category with objects 
$$[n]:=\{0\leq \cdots \leq n\}$$
and order preserving set maps. The interval category $\intvl$ is the category consisting of the objects $[n]$ with $n>0$ and maps which preserve both the order and the extrema.
\end{defn}

  In Joyal's preprint, a duality between the interval category and the simplicial category appears. This is a
 special case of the duality between disks and $\Theta_n$ which is proven in \S\ref{sec:bjgenwreath}. There is an isomorphism of the form below.

\begin{prop}{(\cite[\S 1.1]{JoyalCombDisks})}\label{prop:n1duality}
$\simpl^\op\cong \intvl$
\end{prop}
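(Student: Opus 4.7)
The plan is to define an explicit functor $F : \simpl^{\op} \to \intvl$ via the representable $[n] \mapsto \text{Hom}_\simpl([n], [1])$, construct an inverse $G : \intvl \to \simpl^{\op}$ by sending $[m]$ to its poset of edges, and verify the two compositions are identities. For each $[n] \in \simpl$, the set $\text{Hom}_\simpl([n], [1])$ is totally ordered under pointwise comparison and has exactly $n+2$ elements (an order-preserving map $[n] \to [1]$ is determined by its switching point). The constant maps with values $0$ and $1$ are the min and max, so this set is naturally an object of $\intvl$, canonically isomorphic to $[n+1]$. Precomposition sends a morphism $f : [m] \to [n]$ in $\simpl$ to an order-preserving map $\text{Hom}_\simpl([n], [1]) \to \text{Hom}_\simpl([m], [1])$ that carries constants to constants, so it is a morphism in $\intvl$, and functoriality is immediate from associativity of composition.

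Next I would construct $G : \intvl \to \simpl^{\op}$ with $G([m]) = [m-1]$. The idea is to identify $[m] \in \intvl$ with the chain of its $m$ edges between consecutive elements; this chain is totally ordered with no distinguished extrema, hence an object of $\simpl$. On morphisms, given $g : [m] \to [m']$ in $\intvl$, I would define $G(g) : [m'-1] \to [m-1]$ by $G(g)(j) = \max\{i \in [m] : g(i) \leq j\}$, i.e., the edge of $[m]$ where $g$ last sits at or below level $j$. The conditions $g(0) = 0$ and $g(m) = m'$ guarantee this max exists and lies in $\{0, \ldots, m-1\}$, and it is weakly increasing in $j$.

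The remaining step is to verify $F \circ G = \text{id}_\intvl$ and $G \circ F = \text{id}_{\simpl^{\op}}$. On objects this is the shift bijection $n \leftrightarrow n+1$. On morphisms, both $\text{Hom}_\simpl([m], [n])$ and $\text{Hom}_\intvl([n+1], [m+1])$ are in natural bijection with weakly increasing sequences of length $m+1$ in $\{0, 1, \ldots, n\}$, and the formulas above give mutually inverse identifications. The main obstacle is purely bookkeeping: keeping the $+1$ shifts straight between the extremum-free side $\simpl$ and the extremum-preserving side $\intvl$, and checking that the formula for $G(g)$ really does invert precomposition by the map classifying a switching point. Once the combinatorial correspondence is pinned down, the rest is a direct calculation.
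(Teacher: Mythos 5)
Your proposal is correct. The paper does not actually prove this proposition --- it is quoted from Joyal's preprint with a citation --- and your argument is the standard one that Joyal's duality rests on: $F=\Hom_{\simpl}(-,[1])$ identifies $[n]$ with the $(n+2)$-element interval $[n+1]$, precomposition preserves pointwise order and constants (hence extrema), and the inverse is the ``edge/switching point'' correspondence. The only care needed is the one you already flagged: fixing the identification $\Hom_{\simpl}([n],[1])\cong[n+1]$ so that it is order-preserving rather than order-reversing (e.g.\ indexing a map by the size of its preimage of $0$), after which the mutual-inverse check is the routine computation you describe.
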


Segal introduced the category $\G$ in \cite[Def. 1.1]{Segal} as a
tool for identifying infinite loop spaces. Here we recall this category
and a few properties.

\begin{defn}{($\G$)}\label{def:segal}
Segal's category $\G$ is the opposite of (the skeleton of) the category of
finite pointed sets $\FinSet_*^{\op}$.

An explicit description of the skeleton of $\G$ has objects of the form $\bn := \{ 1,\ldots, n\}$.
A map $f:\bn\to \bl$ is a set map $f:\bn\to \cal{P}(\bl)$ from $\bn$ to the power set of $\bl$ such that distinct elements $a\neq b\in \bn$ are carried to disjoint subsets $f(a)\cap f(b) = \emptyset$ of $\bl$. The composition of two maps $f : \bn\to \bl$ and $g:\bl\to \bk$ is $(gf)(a):=\cup_{b\in f(a)}g(b)$. The identity map $1_\bn : \bn \to \bn$ is $a\mapsto \{a\}$ for $a\in \bn$. 
There is an equivalence $P : \Gamma \xto{\sim} \FinSet_*^{\op}$.  On sets $A\in \ob(\Gamma)$, $P(A):= A\sqcup \{*\}$ and for a map $P(f):P(B)\to P(A)$ 
    \begin{align*}
        P(f)(t) :=
        \begin{cases}
            s & t\in f(s) \\
            * & t\not\in f(s)
        \end{cases}
    \end{align*}

\end{defn}

\subsection{Finite combinatorial $n$-disks}\label{sec:finitendisks}
In this section we introduce Joyal's category of combinatorial disks.

\begin{defn}{($\cal{D}_n$)}\label{def:fcndisks}
An object $X$ in the category $\cal{D}_n$ of {\em finite combinatorial $n$-disks} $\cal{D}_n$, consists of a collection of sets $X_k$ and set maps $s_k$, $t_k$ and $p_k$
\[\begin{tikzcd}
	{X_0} & {X_1} & {X_2} & \cdots & {X_n}
	\arrow["{{s_0}}", shift left=3, from=1-1, to=1-2]
	\arrow["{{t_0}}"', shift right=3, from=1-1, to=1-2]
	\arrow["{{p_1}}"{description}, from=1-2, to=1-1]
	\arrow["{{s_1}}", shift left=3, from=1-2, to=1-3]
	\arrow["{{t_1}}"', shift right=3, from=1-2, to=1-3]
	\arrow["{{p_2}}"{description}, from=1-3, to=1-2]
	\arrow["{{s_2}}", shift left=3, from=1-3, to=1-4]
	\arrow["{{t_2}}"', shift right=3, from=1-3, to=1-4]
	\arrow["{{p_3}}"{description}, from=1-4, to=1-3]
	\arrow["{{s_{n-1}}}", shift left=3, from=1-4, to=1-5]
	\arrow["{{t_{n-1}}}"', shift right=3, from=1-4, to=1-5]
	\arrow["{{p_n}}"{description}, from=1-5, to=1-4]
\end{tikzcd}\]
    which satisfy the relations
    \begin{align}\label{eq:globulareqn}
        \p_ks_{k-1}=1_{k-1}= \p_kt_{k-1},\;\; s_{k}s_{k-1}=t_{k}s_{k-1},\;\; s_{k}t_{k-1}=t_{k}t_{k-1} \quad\text{ for }\quad 1\leq k\leq n.
    \end{align}
In addition, we require 
    \begin{enumerate}
        \item $X_0 =\{*\}$ and $s_0(*)\neq t_0(*)$
\item The equalizer $\Eq(s_k, t_k) = s_{k-1}(X_{k-1})\cup t_{k-1}(X_{k-1})$ for $1\leq k < n$
        \item For each $x\in X_{k}$, the set $\p_{k+1}^{-1}(x)$ is a finite linearly order set with minimum $s_{k}(x)$ and maximum $t_{k}(x)$.
    \end{enumerate}
A map $f:X\to Y$ in $\cal{D}_n$ is a collection of maps $f=\{f_k : X_k \to Y_k\}_{k=0}^n$ which commute with the maps $s_k$, $t_k$ and $p_k$ and preserve the linear orders.
\end{defn}

\begin{motivation}\label{ex:topdisk}
Just as the collection of topological $n$-simplices determine a functor $\simpl \to \cat{Top}$, the collection of topological  $n$-disks 
$$\bb{D}_n := \{x \in \bb{R}^n : |x| \leq 1\}$$
determine an object
\[\begin{tikzcd}
	{\mathbb{D}_0} & {\mathbb{D}_1} & {\mathbb{D}_2} & \cdots & {\mathbb{D}_n}
	\arrow["{{s_0}}", shift left=3, from=1-1, to=1-2]
	\arrow["{{t_0}}"', shift right=3, from=1-1, to=1-2]
	\arrow["{{p_1}}"{description}, from=1-2, to=1-1]
	\arrow["{{s_1}}", shift left=3, from=1-2, to=1-3]
	\arrow["{{t_1}}"', shift right=3, from=1-2, to=1-3]
	\arrow["{{p_2}}"{description}, from=1-3, to=1-2]
	\arrow["{{s_2}}", shift left=3, from=1-3, to=1-4]
	\arrow["{{t_2}}"', shift right=3, from=1-3, to=1-4]
	\arrow["{{p_3}}"{description}, from=1-4, to=1-3]
	\arrow["{{s_{n-1}}}", shift left=3, from=1-4, to=1-5]
	\arrow["{{t_{n-1}}}"', shift right=3, from=1-4, to=1-5]
	\arrow["{{p_n}}"{description}, from=1-5, to=1-4]
\end{tikzcd}\]
There are projections and inclusions 
$$\p_k:\bb{D}_k\to \bb{D}_{k-1} \quad\quad\text{ and }\quad\quad s_k,t_k:\bb{D}_{k}\to \bb{D}_{k+1}.$$
The projection $\p_{k+1}(\hat{x},x_{k+1}):=\hat{x}$ maps a vector onto its first $k$ coordinates.
The inclusions  are given by $s_k(\hat{x}):=(\hat{x},-\sqrt{1-|\hat{x}|^2})$ and  $t_k(\hat{x}) := (\hat{x}, \sqrt{1-|\hat{x}|^2})$.
The relations in Eqn. \eqref{eq:globulareqn} above are satisfied by these maps.
\end{motivation}

\begin{rem}
Notice that for an interior point $x\in\bb{D}_{k}$, 
the fibre $\p_{k+1}^{-1}(x)\cong [s_k(x), t_k(x)]$ is a non-degenerate interval with
endpoints $s_k(x)$ and $t_k(x)$.  On the other hand, for a boundary point
$x\in \del \bb{D}_{k}$, the fibre $\p_{k+1}^{-1}(x)$ is trivial.  In the
definition of a finite combinatorial disks, the topological condition that $y\in \bb{D}_{k}$ is a
boundary point is replaced by the condition that $y$ lies in the images of
$s_{k}$ or $t_{k}$, see condition (2) in Def. \ref{def:fcndisks}.
\end{rem}

\section{Wreath products}\label{sec:genwreath}

\subsection{Abstract wreath products}\label{sec:catwr}

We introduce a generalization of Berger's wreath product which depends on a functor $T : \cal{C}\to\cal{C}$.

\begin{defn}{($\otimes^T$)}\label{def:genwrprod}
    For a functor $T:\cal{C}\to \cal{C}$, the {\em $T$-wreath product}
$$\otimes^T :	\cal{C}_{/T(*)}\times\cal{C} \to  \cal{C}$$
is the pullback $A_r\otimes^T X := A \times_{T(*)} T(X)$ in the diagram below.
\begin{center}
  \begin{tikzpicture}[node distance=2.5cm]
    \node (A) {$A_r\otimes ^T X$};
        \node (B) [right of=A] {$T(X)$};
    \node (C) [below of=A] {$A$};
    \node (D) [right of=C] {$T(*)$};
\draw[->] (A) to node[midway,above] {$\pi$} (B);
\draw[->] (A) to node [swap,midway,left] {$p$} (C);
\draw[->] (B) to node [right] {$T(!_X)$} (D);
\draw[->] (C) to node[midway, above] {$r$} (D);
    \end{tikzpicture}
  \end{center}

\end{defn}

Here the map $!_X : X\to *$ is the unique map from $X$ to the terminal object $*$.

\begin{rem}\label{rem:Stability}
    \begin{enumerate}[(i)]
        \item Choosing a different model for the pullback perturbs $\otimes^T$ up to natural isomorphism. 
        \item Replacing $T$ by a functor which is naturally isomorphic to $T$ perturbs the wreath product $\otimes^T$ by natural isomorphism.

\item If $\phi:A'\to A$ is an equivalence  then the induced map   $\phi^* \otimes 1_X : A'_{\phi^*(r)}\otimes^T X\to A_{r}\otimes^T X$
is not necessarily an equivalence. However, when $\cal{C}=\cat{Cat}$ and $T=\M$ or $T=\Md$ below the maps $T(!_X) : T(X)\to T(*)$ are isofibrations (Prop. \ref{prop:Mprops} (2) and Prop. \ref{def:sigma} (3)), which implies that the map $\phi^* \otimes 1_X$ is an equivalence.
\item In our applications, $T : \cat{Cat}\to \cat{Cat}$ is a $2$-functor. This implies that $T$ preserves equivalences of categories.
\end{enumerate}

\end{rem}

The proposition below is the reason for studying dual pairs $\M$ and $\Md$ of wreath products.

\begin{prop}\label{prop:Duality}
    Let $\cal{C}$ be a 2-category and $T :\cal{C}\to\cal{C}$ is a functor  which defines a wreath product $\otimes^T$.
If $\tau : \cal{C}\to\cal{C}$ an automorphism of $\cal{C}$ and $T^{\tau} : \cal{C}\to \cal{C}$ is the functor defined by $T^\tau =\tau T\tau^{-1}$ then there is a natural isomorphism 
$$\tau\big(A_r\otimes^T B)\cong \tau(A_r)\otimes^{T^{\tau}}\tau(B).$$
\end{prop}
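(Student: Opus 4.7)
The plan is to unfold the definition of $\otimes^T$ as a pullback and push the whole pullback square through $\tau$. Since $\tau$ is an automorphism (in particular, an equivalence) of the ambient 2-category $\cal{C}$, it preserves terminal objects and pullbacks up to canonical isomorphism. Applying $\tau$ to the defining pullback square of $A_r\otimes^T B$ yields a pullback square
\[
\begin{tikzcd}
\tau(A_r\otimes^T B) \arrow[r] \arrow[d] & \tau T(B) \arrow[d,"\tau T(!_B)"] \\
\tau(A) \arrow[r,"\tau(r)"'] & \tau T(*).
\end{tikzcd}
\]

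The second step is to identify this pullback with $\tau(A_r)\otimes^{T^\tau}\tau(B)$. By definition $T^\tau = \tau T\tau^{-1}$, so $T^\tau(\tau(B)) = \tau T\tau^{-1}\tau(B) \cong \tau T(B)$ naturally in $B$, using that $\tau^{-1}\tau \cong 1_{\cal{C}}$. Since $\tau$ is an equivalence it preserves terminal objects, so $\tau^{-1}(*)\cong *$ and thus $T^\tau(*) = \tau T\tau^{-1}(*) \cong \tau T(*)$. Under these two canonical isomorphisms, the map $T^\tau(!_{\tau(B)}) : T^\tau(\tau(B))\to T^\tau(*)$ is identified with $\tau T(!_B)$ by functoriality of $T^\tau = \tau T\tau^{-1}$ applied to the unique map $!_{\tau(B)}:\tau(B)\to *$ (using uniqueness of the map to the terminal object to identify $\tau^{-1}(!_{\tau(B)})$ with $!_B$ up to the canonical isomorphism $\tau^{-1}\tau(B)\cong B$). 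The structure map on $\tau(A_r)$ is $\tau(r):\tau(A)\to \tau T(*)\cong T^\tau(*)$, which is exactly what is needed to form $\tau(A_r)\otimes^{T^\tau}\tau(B)$.

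Combining these, the square above is isomorphic to the defining pullback square for $\tau(A_r)\otimes^{T^\tau}\tau(B)$, giving the desired natural isomorphism. Naturality in the pair $(A_r,B)$ follows because all the identifications used (preservation of pullbacks and of the terminal object by $\tau$, and the counit $\tau^{-1}\tau\cong 1$) are natural in their inputs.

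The only real obstacle is bookkeeping of the structure maps: one must check that the two legs of the pullback square, after transport by $\tau$, really are the canonical legs defining $\otimes^{T^\tau}$ applied to $\tau(A_r)$ and $\tau(B)$, rather than merely isomorphic to them as abstract morphisms. This is handled by invoking uniqueness of maps into the terminal object together with $2$-functoriality of $\tau$; if $\tau$ is a strict automorphism the identifications are equalities and no further care is needed, while if $\tau$ is only an equivalence the same argument goes through using Remark \ref{rem:Stability}(i) and (ii), which ensure that $\otimes^{T^\tau}$ is well-defined up to natural isomorphism under the replacements we are making.
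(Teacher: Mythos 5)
Your argument is correct and is essentially the paper's own proof: apply $\tau$ to the defining pullback square, rewrite $\tau T$ as $T^{\tau}\tau$, and conclude via the stability of the wreath product under change of pullback model (Remark \ref{rem:Stability}(i)). Your extra bookkeeping about terminal objects and the counit $\tau^{-1}\tau\cong 1$ is a more careful spelling-out of the same identification, not a different route.
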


\begin{proof}
Applying $\tau$ to the commutative diagram in Def. \ref{def:genwrprod} gives the left-hand side below and then applying the relation $\tau T = T^{\tau} \tau$ gives the right-hand side. The result follows from Rmk. \ref{rem:Stability} (i) above.
\begin{center}
  \begin{tikzpicture}[node distance=2.5cm]
    \node (A) {$\tau(A_r\otimes ^T X)$};
        \node (B) [right of=A] {$\tau T(X)$};
    \node (C) [below of=A] {$\tau(A)$};
    \node (D) [right of=C] {$\tau T(*)$};

    \node (ZZZ) [right of=B] {};

    \node (A1) [right of=ZZZ] {$\tau(A_r\otimes ^T X)$};
        \node (B1) [right of=A1] {$T^\tau(\tau(X))$};
    \node (C1) [below of=A1] {$\tau(A)$};
    \node (D1) [right of=C1] {$T^\tau(*)$};

\draw[->] (A) to node [above] {$\tau(\pi)$} (B);
\draw[->] (A) to node [swap,left] {$\tau(p)$} (C);
\draw[->] (B) to node [right]{$\tau T(!_X)$} (D);
\draw[->] (C) to node [above] {$\tau(r)$} (D);

\draw[->] (A1) to node [above] {$\tau(\pi)$} (B1);
\draw[->] (A1) to node [swap,left] {$\tau(p)$} (C1);
\draw[->] (B1) to node [right]{$T^\tau(!_{\tau(X)})$} (D1);
\draw[->] (C1) to node [above]{$\tau(r)$} (D1);

    \end{tikzpicture}
  \end{center}

\end{proof}

\begin{insp}{(Clubs)}\label{insp:kelly}
For a category $C$, the category of endomorphisms $[C, C]$ is
monoidal under composition. If $T$ is a monad then the overcategory
$[C,C]_{/T}$ is also monoidal. When $C$ has pullbacks and $T$ satisfies some conditions, Kelly showed \cite{Kelly_Clubs_V} that this monoidal structure descends to a collection of objects in $[C,C]_{/T}$.  This is equivalent to $C_{/T(*)}$ via the evaluation at $*$ map. 
  \end{insp}

\subsection{Berger's wreath product and $\Theta_n$}\label{sec:catwr}

 In this section, we introduce the wreath product $A\wr B: =A\otimes^{\M} B$ by setting $T:=\M$
in the construction from 
Def. \ref{def:genwrprod}.  The product $\otimes^\M$ determined by the functor $\M$ matches the literature, see \cite{BergerLoopSpace} or \cite{AyalaConfig,rezk,bergner}.

\begin{defn}\label{def:pi}
There is a functor $\M : \cat{Cat} \to \cat{Cat}$. For any category $C$, there is a category $\M(C)$ given by the data below.
    \begin{enumerate}
        \item An object of $\M(C)$ is a pair $(I,a)$ where $I = \{1,2,\ldots,n\} \in \G$ is a set and $a:I\to \ob(C)$ is a function.
        \item A map $f:(I,a)\to (J,b)$ in $\M(C)$ consists of a collection $f = (f_0, \{f_{ji}\})$ where $f_0:I\to J$ in $\G$ and for each $i\in I$ and $j\in f_0(i)$, $f_{ji}:a(i)\to b(j)$ is a map in $C$.
        \item The composition $gf:(I,a)\to (K,c)$ of $f:(I,a)\to (J,b)$ and $g:(J,b)\to (K,c)$ is given by the pair $gf=((gf)_0, \{(gf)_{ki}\})$ with  $(gf)_0:=g_0f_0$ and $(gf)_{ki}:=g_{kj}f_{ji}$ where $j\in f_0(i)$ is the unique element such that $k\in g_0(j)$.

      \item The identity map $1_{(I,a)} : (I,a)\to (I,a)$ is the map $1_{(I,a)} := (1_0, 1_{ji})$ where $1_0$ is the identity map $1_0 := 1_I$ on $I$ and $1_{ii} : a(i) \to a(i)$ is the identity map $1_{ii} := 1_{a(i)}$.
    \end{enumerate}

 If $F : C \to D$ is a functor then there is a functor  $\M(F) : \M(C)\to \M(D)$. On objects $\M(F)(I,a) := (I, F(a))$. If $f = (f_0, \{f_{ji}\})$ then $\M(F)(f) := (f_0, \{F(f_{ji})\})$.    
\end{defn}

The proposition below gives us some basic properties of $\M$. In particular, (2) below addresses the isofibration condition mentioned in (iii) of Rmk. \ref{rem:Stability}.

\begin{prop} \label{prop:Mprops}
  \begin{enumerate}
    \item  $\M(*)\cong \G$
\item If $X$ is a category and $!_X : X \to *$ is the canonical map then the map  $\M(!_X) : \M(X)\to \M(*)$ is an isofibration.
\end{enumerate}
\end{prop}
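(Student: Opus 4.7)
The plan for (1) is to unravel the definitions. Since the terminal category $*$ has a single object and only the identity morphism, any function $a:I\to\ob(*)$ is uniquely determined, and for a map $(I,a)\to(J,b)$ in $\M(*)$ each fiber morphism $f_{ji}:a(i)\to b(j)$ in $*$ is forced to be an identity. The data of an object of $\M(*)$ therefore reduces to a choice of $I\in\G$, and the data of a morphism reduces to a choice of $f_0:I\to J$ in $\G$. A direct check that identities and composition match on the nose then gives the asserted isomorphism $\M(*)\cong\G$.

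For (2), the plan is to exhibit, for any $(I,a)\in\M(X)$ and any isomorphism $\phi:I\to J$ in $\G=\M(*)$, an iso $\tilde{\phi}:(I,a)\to(J,b)$ in $\M(X)$ satisfying $\M(!_X)(\tilde{\phi})=\phi$. First I would characterize the isomorphisms of $\G$: via the equivalence $P:\G\xrightarrow{\sim}\FinSet_*^{\op}$ of Def.~\ref{def:segal}, they correspond to basepoint-preserving bijections, so $\phi$ is an iso in $\G$ precisely when each $\phi(i)\subset J$ is a singleton and the underlying assignment $i\mapsto\phi(i)$ is a bijection $I\to J$.

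Given this, the lift is obtained by transport of structure along the bijection $\phi$. I would define $b:J\to\ob(X)$ by $b(j):=a(\phi^{-1}(j))$ and set $\tilde{\phi}:=(\phi,\{\tilde{\phi}_{ji}\})$ where, for the unique $i$ with $j\in\phi(i)$, the component $\tilde{\phi}_{ji}$ is the identity map of $a(i)=b(\phi(i))$. By construction $\M(!_X)(\tilde{\phi})=\phi$, and the analogous construction applied to $\phi^{-1}$, again with identity fiber morphisms, provides a two-sided inverse for $\tilde{\phi}$ in $\M(X)$; hence $\tilde{\phi}$ is an isomorphism as required.

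There is no serious obstacle here: the only substantive observation is the identification of isos in $\G$ with honest bijections via $P$, after which the lift is forced, and checking that the constructed $\tilde{\phi}$ composes to the identity on either side amounts to applying the composition rule of Def.~\ref{def:pi}.
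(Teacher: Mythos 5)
Your proposal is correct and follows essentially the same route as the paper's own proof: part (1) by unwinding the definitions over the terminal category, and part (2) by noting that isomorphisms in $\G$ correspond to pointed bijections (so each $\phi(i)$ is a singleton), then lifting via $b(j):=a(\phi^{-1}(j))$ with identity fiber morphisms. Your version is marginally more explicit about verifying the two-sided inverse, but the argument is the same.
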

\begin{proof}
For (1), since $*$ consists of one object $o$ and $End_*(o)=\{1_o\}$, there is one map $\triv : I \to \{o\}$. So the objects of $\M(*)$ are pairs $(I,\triv)$ with $I\in \G$. A map $f : (I,\triv) \to (J,\triv)$ is a pair $f = (f_0, \{f_{ji}\})$ where the maps $f_{ji}: \triv(i)\to \triv(j)$ are all required to be identity $1_o$.

For (2), isomorphisms in $\FinSet_*$ are permutations so if $f : \M(!_X)(I,a)  \to J$ is an isomorphism in $\G$ then $\#f(s)=1$ is a singleton for all $s\in I$. When $j\in f(i)$ set $b(j) := a(i)$, so there is an isomorphism $\tilde{f} : (I, a) \to (J,b)$ which satisfies $\M(!_X)(\tilde{f})=f$ given by $\tilde{f} = (f, \{f_{ji}\})$ where $f_{ji} := 1_{a(i)} : a(i) \to a(i)$.
  \end{proof}

Following Def. \ref{def:genwrprod}, the functor $\M$ leads to a product $\otimes^\M$. The definition below contains the details.

\begin{defn}{($\otimes^\M$ or $\wr$)}\label{def:wrprod}
The {\em wreath product} is a 2-functor $\otimes^\M : \cat{Cat}_{/\G}\times \cat{Cat}\to \cat{Cat}$.
  If $X$ and $A$ are categories and $\gamma : X \to \G$ is a functor then $X_\gamma \otimes^\M A := X_\gamma \times_{\Gamma} \M(A)$ is the pullback of the diagram below.
\begin{center}
  \begin{tikzpicture}[node distance=2.5cm]
    \node (A) {$X\otimes^\M A$};
        \node (B) [right of=A] {$\M(A)$};
    \node (C) [below of=A] {$X$};
    \node (D) [right of=C] {$\G$};
\draw[->] (A) to node {} (B);
\draw[->] (A) to node {} (C);
\draw[->] (B) to node {} (D);
\draw[->] (C) to node[above] {$\gamma$} (D);
    \end{tikzpicture}
  \end{center}

    More concretely, an object of $X\otimes^\M A$ is a pair $(I,a)$ where $I\in X$ is an object of $X$ and $a : \gamma(I)\to \ob(A)$.  A morphism $f:(I,a)\to (J,b)$ consists of $f_0:I\to J$ in $X$ and for each $i\in \gamma(I)$ and $j\in \gamma(f_0)(i)$ a map $f_{ji} : a(i)\to b(j)$ in $A$. This construction agrees with \cite[Def. 3.1]{BergerLoopSpace}, $X\wr A = X\otimes^\M A$.

\end{defn}

\begin{defn}{(Segal map)}
If $X_\gamma \in \cat{Cat}_{/\G}$ is an object then the functor $\gamma:X\to \G$ is called the {\em Segal map}. 
\end{defn}

\subsection{Wreath products of the form $\simpl\wr C$}

 We will now give an account of wreath products of the form $\simpl\wr C$. 
 First we need a Segal map $\gamma:\simpl\to \G$.

\begin{defn}{($\gamma : \Delta \to \G$)}\label{def:segalmap}
For  $[n]\in \simpl$, let $E([n]) := \{e_1,e_2,\ldots,e_{n}\}$ be the set of edges $e_i : i-1 \to i$
  which generate the poset $[n] = \{0<1<2<\cdots<n\}$ as a category.  On objects the Segal map is defined by setting $\gamma([n]) := E([n])$.

Now notice that, for each $\phi\in \Hom_{[n]}(i,j)$, there is a subset
$E(\phi)\subseteq E([n])$ of edges whose composition is $\phi$.
If $f : [n]\to [m]$ is a map in $\simpl$ and $e\in E([n])$ then the Segal
map is given by $\gamma(f)(e) := E(f(e)) \subseteq \gamma([m])$.
\end{defn}

Following Def. \ref{def:wrprod}, the wreath product $\simpl\wr C$ is a category with objects $([n],c)$ where the map 
 $c : E([n])\to \ob(C)$ labels each edge of $[n]$ by an object $c_i := c(i-1\to i)$ of $C$.
\[\begin{tikzcd}
	0 & 1 & \cdots & n
	\arrow["{c_1}", from=1-1, to=1-2]
	\arrow["{c_2}", from=1-2, to=1-3]
	\arrow["{c_n}", from=1-3, to=1-4]
\end{tikzcd}\]

A map $f : ([n],c) \to ([m],d)$ consists of a map $f_0 : [n]\to [m]$ in $\simpl$ together with maps $\{f_{ji} : c(i)\to d(j)\}_{j\in f_0(i)}$ in $C$. The diagram below is a picture of a map  $f:([3], c)\to ([4], d)$

\[\begin{tikzcd}
	0 & 1 & 2 & 3 \\
	0 & 1 & 2 & 3 & 4
	\arrow[""{name=0, anchor=center, inner sep=0}, "{c_1}", from=1-1, to=1-2]
	\arrow[from=1-1, to=2-1]
	\arrow["{c_2}", from=1-2, to=1-3]
	\arrow[from=1-2, to=2-2]
	\arrow[""{name=1, anchor=center, inner sep=0}, "{c_3}", from=1-3, to=1-4]
	\arrow[from=1-3, to=2-2]
	\arrow[from=1-4, to=2-4]
	\arrow[""{name=2, anchor=center, inner sep=0}, "{d_1}"', from=2-1, to=2-2]
	\arrow[""{name=3, anchor=center, inner sep=0}, "{d_2}"', from=2-2, to=2-3]
	\arrow[""{name=4, anchor=center, inner sep=0}, "{d_3}"', from=2-3, to=2-4]
	\arrow["{d_4}"', from=2-4, to=2-5]
	\arrow["{f_{1,1}}"{description}, shorten <=4pt, shorten >=4pt, dashed, from=0, to=2]
	\arrow["{f_{2,3}}"{description}, shorten <=7pt, shorten >=7pt, dashed, from=1, to=3]
	\arrow["{f_{3,3}}"{description}, shorten <=4pt, shorten >=4pt, dashed, from=1, to=4]
\end{tikzcd}\]
where the vertical solid arrow depicts the morphism $f_0$ in $\simpl$ and the $f_{j,i}$ are morphisms in $C$. When maps are illustrated in this way, their composition is given by stacking the diagrams and composing those interfacing constituents.

Lastly, we use the wreath product to define $\Theta_n$ below.

\begin{defn}{($\Theta_n$)}\label{def:thetan} \label{eq:thetaadd}
$\Theta_1 := \simpl$ and  $\Theta_n := \simpl\wr \Theta_{n-1}$ for $n>1$.
\end{defn}

\subsection{Combinatorial disks as wreath products}\label{sec:combdisk}

Following Prop. \ref{prop:Duality}, there is a conjugate functor 
\begin{equation}\label{eq:dualfun} \Md(C) := \M(C^{\op})^{\op} \end{equation}
of $\M$ with respect to the automorphism  $-^{\op} : \cat{Cat}\to \cat{Cat}$.
Thm. \ref{thm:diskaswreath} below is the opposite $\intvl\otimes^\Md \cal{D}_n\simeq\cal{D}_{n+1}$ of Berger's $\Theta_n$ recursion in Def. \ref{def:thetan} above. Before the proving this theorem, the functor $\Md$ is discussed in more detail.

    If $(A, \ast)$ is a pointed set then let $A\backslash\ast \subset A$ be the result removing the basepoint from the set $A$.

\begin{prop}\label{def:sigma}
\begin{enumerate}
\item For a category $C$, the category $\Md(C)$ from Eqn. \eqref{eq:dualfun} has objects pairs $(I,a)$ with $I\in \G^{\op}$ and $a:I\backslash\ast \to \ob(C)$.
    \begin{itemize}
        \item A map $f:(I,a)\to (J,b)$ is a pair $f=(f_0, f_{ji})$ where $f_0:I\to J$ in $\Gd$ for each $i\in I$ such that $j=f_0(i)\in J\backslash\ast$, a map $f_{ji}:a(i)\to b(j)$ in $C$.
        \item If $f:(I,a)\to (J,b)$ and $g:(J,b)\to (K,c)$ then $gf$ is given by $(gf)_0=g_0f_0$ and $(gf)_{ki}=g_{kj}f_{ji}$ where $k=g_0(j)$ and $j=f_0(i)$.
    \end{itemize}
\item $\Md(*)\cong \Gd$
\item If $X$ is a category and $!_X : X \to *$ is the canonical map then the map  $\M(!_X) : \Md(X)\to \Md(*)$ is an isofibration.
\end{enumerate}
\end{prop}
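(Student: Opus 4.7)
The plan is to unpack the definition $\Md(C) := \M(C^{\op})^{\op}$ from Eqn. \eqref{eq:dualfun} and to transport each assertion about $\M$ in Prop. \ref{prop:Mprops} through the involution $(-)^{\op}$. All three items should reduce to the corresponding facts for $\M$ together with the dictionary $\G\simeq \FinSet_*^{\op}$ recorded in Def. \ref{def:segal}.

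For (1), I would begin from the observation that $\Md(C)$ and $\M(C^{\op})$ share the same collection of objects: pairs $(I,a)$ with $I\in\G$ and $a:I\to\ob(C)$. Viewing $I$ through the equivalence $P:\G\xto{\sim}\FinSet_*^{\op}$ turns $I$ into a pointed set, placing it in $\Gd$ and recasting $a$ as $a:I\backslash\ast\to\ob(C)$ as in the statement. The morphism description is the main bookkeeping step: a map $(I,a)\to(J,b)$ in $\Md(C)$ is, by definition, a map $(J,b)\to(I,a)$ in $\M(C^{\op})$, namely a pair $(g_0,\{g_{ij}\})$ with $g_0:J\to I$ in $\G$ and, for each $j\in J$ and $i\in g_0(j)$, a map $g_{ij}:b(j)\to a(i)$ in $C^{\op}$, equivalently $a(i)\to b(j)$ in $C$. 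Since a $\G$-morphism $g_0:J\to I$ is the same data as a partial function $I\to J$, i.e.\ a morphism $f_0:I\to J$ in $\Gd$, the indexing condition ``$i\in g_0(j)$'' translates to ``$j=f_0(i)\in J\backslash\ast$''. The composition and identity formulas follow by dualizing those in Def. \ref{def:pi}.

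Parts (2) and (3) should then be immediate corollaries. For (2), $\Md(*) = \M(*^{\op})^{\op} = \M(*)^{\op}\cong\G^{\op}=\Gd$ by Prop. \ref{prop:Mprops}(1). For (3), the stated functor is $\Md(!_X)$; using $(!_X)^{\op}=!_{X^{\op}}$ gives $\Md(!_X)=\M(!_{X^{\op}})^{\op}$, and Prop. \ref{prop:Mprops}(2) asserts that $\M(!_{X^{\op}})$ is an isofibration, so it remains to invoke the formal fact that the opposite of an isofibration is an isofibration (given an isomorphism $\phi$ in the target of $p^{\op}$, lift $\phi^{-1}$ under $p$ by the isofibration property, invert, and dualize). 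The only genuine obstacle is keeping the $\G/\Gd$ translation consistent in part (1); the remainder is formal.
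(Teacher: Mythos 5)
Your proposal is correct and follows essentially the same route as the paper: part (1) by unwinding $\Md(C)=\M(C^{\op})^{\op}$ through the dictionary $\G\simeq\FinSet_*^{\op}$, and parts (2) and (3) by transporting Prop.~\ref{prop:Mprops} along the involution $(-)^{\op}$. You simply supply a bit more detail than the paper does (notably the explicit check that the opposite of an isofibration is an isofibration), which the paper leaves implicit.
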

\begin{proof}
  Statement (1) comes from unwinding the definitions, while statements (2) and (3) follow from Prop. \ref{prop:Mprops} and the involution $-^{\op} : \cat{Cat}\to \cat{Cat}$. In more detail,
  \begin{enumerate}
\setcounter{enumi}{1}
\item $\Md(*) = (\M(*^{\op}))^\op = (\M(*))^\op = \G^\op$
\item By Prop. \ref{prop:Mprops}, $\M(!_{X^\op}) : \M(X^{\op}) \to \G$ is an isofibration and $-^{\op}$ automorphism implies $\M(!_{X^\op})^{\op} : (\M(X^{\op}))^\op \to \G^\op = \Md(X)\to \Md(*)$ is an isofibration.
    \end{enumerate}

  \end{proof}

\begin{rem}
    Berger showed that $\Pi(C)$ as the result of freely closing $C$ under finite products and adding a zero object \cite[Lem 3.2]{BergerLoopSpace}.  So the category $\Pi^\op(C)$ can be thought of as freely closing $C$ under finite coproducts and adding a zero object.  If $a:I\backslash\ast \to \ob(C)$ identifies a family of objects in $C$ then $(I,a)\in \Pi^\op(C)$ is their coproduct.
\end{rem}

If $X_\omega \in \cat{Cat}_{/\G}$ is an object then $\omega:X\to \G^\op$ is called the {\em coSegal map}. We will write $X\otimes^{\Md} A$ instead of $X_\omega \otimes^{\Md} A$ when it is unambiguous. More unwinding of definitions above produces the definition below.

\begin{defn}
The dual wreath product $X_\omega \otimes^\Md A := X_\omega \times_{\Gd} A$ is the category with objects given by pairs $(x,a)$ where $x\in X$ and $a:\omega(x)\backslash\ast \to \ob(A)$ is a function.
\begin{itemize}
        \item A map $f:(x,a)\to (y,b)$ is a pair $(f_0, \{f_{ji}\})$ where $f_0 : x\to y$ is a map in $X$ and, for each $i\in \omega(x)\backslash\ast$ such that $\omega(f_0)(i)=j\in \omega(y)\backslash\ast$, there is a map $f_{ji} : a(i)\to b(j)$ in $A$. 
    \end{itemize}
\end{defn}

\begin{rem}
We have found two instances of the cowreath product in the literature.  For a category $C$, Borceux introduced a construction $\cat{Set}(C)$, the finite analogue $\cat{FinSet}(C)$ is the cowreath product $\cat{FinSet}\otimes^{\Pi^\op}C$ where the coSegal functor $-_+ : \cat{FinSet}\to \cat{FinSet}_*$ freely adjoints a basepoint, see \cite[Ch. 8]{Borceux94}. 
Lurie uses a category $\Delta_S = \Delta_\omega\otimes^{\Pi^\op}S$ where $S$ is a set and the coSegal functor $\omega : \Delta\to \cat{FinSet}$ is given by $\omega([n]):=\{0,\ldots,n\}$, see  \cite[Def. 2.1.1]{Lurie09_Goodwillie}.
\end{rem}

\begin{prop}\label{def:cosegalmap}
If $\g : \simpl \to \G$ is the Segal functor from Def. \ref{def:segalmap}
then the coSegal functor $\g^{\op} : \simpl^{\op} \to \G^{\op}$ admits a
description as $\omega : \intvl\to \FinSet_*$ below under the identifications in 
Prop. \ref{prop:n1duality} and Def. \ref{def:segal}.
    \begin{itemize}
       \item  If $[n] \in \intvl$ then $\omega([n]):=(\{1,2,\ldots,n-1, *\},\{*\})$ is the set of non-extreme points.
       \item  If $f:[n]\to [m]$ is a map of intervals then $\omega(f):\omega([n])\to \omega([m])$ in $\FinSet_*$
        \begin{align*}
            \omega(f)(i) :=
            \begin{cases}
                f(i) & \textnormal{ if } f(i) \not\in \{0,m\} \\
                * & \textnormal{ if } f(i) \in \{0,m\} \\
            \end{cases}
        \end{align*}
    \end{itemize}
\end{prop}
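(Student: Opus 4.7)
The plan is to unwind both identifications explicitly and match the resulting formulas on objects and morphisms. I will use the classical Joyal description of $\simpl^{\op}\cong\intvl$ from Prop. \ref{prop:n1duality} via cuts: an object $[n]\in\simpl$ corresponds to the $(n+2)$-element linearly ordered set $[n+1]\in\intvl$ of pairs $(L,R)$ with $L\sqcup R=[n]$ and $L<R$, and a map $f:[n]\to[m]$ in $\simpl$ dualizes to $f^{\vee}:[m+1]\to[n+1]$ in $\intvl$ defined by $f^{\vee}(j):=\#\{k\in[n]:f(k)<j\}$. Under this identification the interior positions $\{1,\ldots,n\}\subset[n+1]$ correspond bijectively to the edges $E([n])=\{e_1,\ldots,e_n\}$ via $j\leftrightarrow e_j$, and the identity $f^{\vee}(j)=i$ is equivalent to $f(i-1)<j\leq f(i)$.

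On objects, $\gamma([n])=E([n])$, and the equivalence $P:\G\to\FinSet_*^{\op}$ of Def. \ref{def:segal} produces the pointed set $(E([n])\sqcup\{*\},\{*\})$. Under $e_j\leftrightarrow j$ this is precisely $\omega([n+1])=(\{1,\ldots,n,*\},\{*\})$, the set of non-extreme points of $[n+1]$.

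On morphisms, the Segal map sends $e_i$ to $E(f(e_i))=\{e_{f(i-1)+1},\ldots,e_{f(i)}\}$, so by the formula for $P$ we have $P(\gamma(f))(e_j)=e_i$ exactly when $f(i-1)<j\leq f(i)$ for some necessarily unique $i\in\{1,\ldots,n\}$, and equals $*$ otherwise. On the other hand $\omega(f^{\vee})(j)=f^{\vee}(j)$ whenever $f^{\vee}(j)\in\{1,\ldots,n\}$ and equals $*$ when $f^{\vee}(j)\in\{0,n+1\}$; combined with the equivalence $f^{\vee}(j)=i\Leftrightarrow f(i-1)<j\leq f(i)$, this matches $P\circ\gamma$ under $e_i\leftrightarrow i$.

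The main obstacle is pinning down the Joyal duality concretely enough to carry out the comparison — in particular, the off-by-one shift between $[n]\in\simpl$ and $[n+1]\in\intvl$, the bijection of edges of $[n]$ with interior cuts of $[n+1]$, and the asymmetric strict/non-strict inequalities $f(i-1)<j\leq f(i)$ that appear on both sides. Once this combinatorial dictionary is fixed, the remaining verification is bookkeeping.
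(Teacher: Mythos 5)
The paper states this proposition without proof, treating it as an unwinding of the identifications in Prop.~\ref{prop:n1duality} and Def.~\ref{def:segal}; your argument supplies exactly that unwinding and it checks out. In particular the dictionary you fix is the right one: under the cut description of Joyal duality the interior cut at position $j$ of $[n+1]$ separates $j-1$ from $j$ and so corresponds to the edge $e_j\in E([n])$, the formula $P(\gamma(f))(e_j)=e_i \Leftrightarrow e_j\in E(f(e_i))=\{e_{f(i-1)+1},\dots,e_{f(i)}\} \Leftrightarrow f(i-1)<j\le f(i)$ is correct, and this is precisely the condition $f^{\vee}(j)=i$ for $f^{\vee}(j)=\#\{k:f(k)<j\}$, with both sides defaulting to the basepoint otherwise. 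Nothing is missing.
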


The coSegal map $\omega : \intvl \to \Gd$ allows us to introduce a cowreath product 
 $\intvl\otimes^\Md X$ for any category $X$. The theorem below shows that Joyal's disk categories from Def. \ref{def:fcndisks} admit an inductive definition in terms of this product.

\begin{thm}\label{thm:diskaswreath}
  The disk category is an iterated wreath product,
    \begin{align}
        \intvl \simeq \cal{D}_1  \label{eq:1daw}\\
        \intvl_\omega \otimes^\Md \cal{D}_n\simeq \cal{D}_{n+1}.  \label{eq:2daw}
    \end{align}
\end{thm}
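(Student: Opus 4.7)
The plan is to prove each equivalence by an explicit construction in each direction and verification of the disk axioms by hand.

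\emph{Equation \eqref{eq:1daw}.} Unwinding Def.~\ref{def:fcndisks} at $n=1$, an object is a diagram $X_0 \rightrightarrows X_1$ with $p_1$ going back, subject to the globular relations. Since $X_0 = \{*\}$, the fiber $p_1^{-1}(*) = X_1$ is by axiom~(3) a finite linearly ordered set with minimum $s_0(*)$ and maximum $t_0(*)$, and $s_0(*) \neq t_0(*)$ forces $|X_1| \geq 2$; this is precisely an object of $\intvl$. On morphisms $f_0 = 1_{\{*\}}$ is automatic, commutation of $f_1$ with $s_0, t_0$ is preservation of extrema, and order-preservation is the remaining interval-morphism condition. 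This gives $\cal{D}_1 \cong \intvl$.

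\emph{Equation \eqref{eq:2daw}.} I construct $F : \cal{D}_{n+1} \to \intvl_\omega \otimes^\Md \cal{D}_n$ by a \emph{fiber-over-$X_1$} construction. Given $X \in \cal{D}_{n+1}$, identify $X_1$ with $[m] \in \intvl$ via \eqref{eq:1daw} and, for each non-extreme $i \in X_1$ (equivalently $i \in \omega([m]) \setminus \{*\}$), define an $n$-disk $D(i)$ by
\[
D(i)_0 := \{i\}, \qquad D(i)_k := (p_2 \circ \cdots \circ p_{k+1})^{-1}(i) \subseteq X_{k+1} \text{ for } k \geq 1,
\]
with source, target, projection maps restricted from $s_{k+1}, t_{k+1}, p_{k+2}$ of $X$; set $F(X) := ([m], D)$. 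The key verifications for $D(i)$ to be an $n$-disk are: (i) the globular relations descend directly from $X$; (ii) $s_0^{D(i)} \neq t_0^{D(i)}$ since $i$ non-extreme means $i \notin \Eq(s_1^X, t_1^X) = \{s_0(*), t_0(*)\}$; (iii) intersecting $\Eq(s_{k+1}^X, t_{k+1}^X) = s_k^X(X_k) \cup t_k^X(X_k)$ with $D(i)_k$, using $p_{k+1} s_k = 1$ to see that $s_k^X, t_k^X$ carry $D(i)_{k-1}$ into $D(i)_k$, yields the equalizer axiom; and (iv) fibers of $p_{k+1}^{D(i)}$ coincide with fibers of $p_{k+2}^X$ over points of $D(i)_k$, inheriting their linear orders. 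A morphism $f : X \to Y$ in $\cal{D}_{n+1}$ restricts to $f_1 : [m_X] \to [m_Y]$ in $\intvl$, and for each non-extreme $i$ with $f_1(i)$ also non-extreme the restrictions $f_{k+1}|_{D^X(i)_k}$ assemble into a map $D^X(i) \to D^Y(f_1(i))$ of $n$-disks; when $f_1(i)$ is extreme, no internal map is demanded, matching precisely the cowreath product's condition that $f_{ji}$ is specified only when $\omega(f_0)(i) \in \omega([m_Y]) \setminus \{*\}$.

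The inverse $G : \intvl_\omega \otimes^\Md \cal{D}_n \to \cal{D}_{n+1}$ assembles an $(n+1)$-disk from $([m], D)$ by setting $G([m], D)_0 := \{*\}$, $G([m], D)_1 := [m]$, and, for $\ell \geq 1$,
\[
G([m], D)_{\ell+1} := \{\sigma_\ell, \tau_\ell\} \,\sqcup\, \bigsqcup_{i=1}^{m-1} D(i)_\ell,
\]
where $\sigma_\ell, \tau_\ell$ are formal symbols providing the iterated source/target columns over $0, m \in [m]$, with $s_\ell(\sigma_{\ell-1}) = t_\ell(\sigma_{\ell-1}) = \sigma_\ell$ and similarly for $\tau$, and the interior source/target/projection maps inherited from the $D(i)$. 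That $G([m], D)$ satisfies the $(n+1)$-disk axioms reduces to the $n$-disk axioms for the $D(i)$ plus the observation that extreme fibers are singletons, forcing $s_\ell = t_\ell$ there. The compositions $F \circ G$ and $G \circ F$ are canonically identified with the identities by direct inspection.

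\emph{The main obstacle} is bookkeeping the level shift --- level $k+1$ of the ambient $(n+1)$-disk corresponds to level $k$ of each interior $n$-disk --- and establishing inductively that axiom (2) of $\cal{D}_{n+1}$ forces the decomposition $X_{k+1} = \{\sigma_k, \tau_k\} \sqcup \bigsqcup_i D(i)_k$. Once this decomposition is pinned down, the globular, equalizer, and linear-order axioms transfer mechanically between the two descriptions.
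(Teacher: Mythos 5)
Your proof is correct and follows essentially the same route as the paper's: the base case is handled identically, and your fiber construction $D(i)$ is exactly the paper's $\tau^i X$ (with $\tau^iX_0:=\{i\}$ and $\tau^iX_k:=p_{k+1}^{-1}(\tau^iX_{k-1})$, structure maps restricted from $X$), verified against the disk axioms by the same three checks. The only divergence is the final step, where you build an explicit quasi-inverse $G$ --- carefully tracking the singleton columns $\{\sigma_\ell,\tau_\ell\}$ over the extreme points --- whereas the paper checks that its functor $\Phi$ is fully faithful and essentially surjective; your reassembly makes the decomposition $X_{k+1}=\{\sigma_k,\tau_k\}\sqcup\bigsqcup_i D(i)_k$ explicit, which the paper uses only implicitly when defining its inverse $\beta$ on hom-sets.
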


\begin{proof}
  For the base case Eqn. \eqref{eq:1daw}, by Def. \ref{def:fcndisks} an object $X \in \cal{D}_1$ is a diagram 
\[\begin{tikzcd}
	{\{*\}} & {X_1}
	\arrow["{s_0}", shift left=3, from=1-1, to=1-2]
	\arrow["{t_0}"', shift right=3, from=1-1, to=1-2]
	\arrow["{p_1}"{description}, from=1-2, to=1-1]
\end{tikzcd}\]
such that $\p_1^{-1}(*) = X_1 = \{ s_0(*) \leq 1 \leq 2 \leq \cdots \leq t_0(*) \}$ is a finite linearly ordered set with minimum $s_0(*)$ and maximum $t_0(*)$. A map $f : X\to Y$ in $\cal{D}_1$ is determined by the order and endpoint preserving map $f_1 : X_1\to Y_1$.
As $\intvl$ is skeleton of the category of finite linearly ordered sets with order and extrema preserving maps, the functor $f : \cal{D}_1 \to \intvl$ determined by the assignment $f(X) := [\#X_1]$ is an equivalence of categories.

Now for the induction Eqn. \eqref{eq:2daw}, there is a functor
\begin{equation}\label{eq:inductioniso}
\Phi:\cal{D}_{n+1}\to \intvl_\omega \otimes^\Md \cal{D}_n \quad\textnormal{ given by }\quad \Phi(X) := (X_1, a)
\end{equation}
where $a(i) := \t^iX \in \ob(\cal{D}_n)$ assigns to each $i\in \omega(X_1)$ ($i\in X_1$ and $i$ non-extreme) an $n$-disk $\t^iX$. 

We now check that the definition of $\Phi$ makes sense. Recall that an $(n+1)$-disk $X\in \cal{D}_{n+1}$ is determined by a collection of data
\[\begin{tikzcd}
	{X_0} & {X_1} & {X_2} & \cdots & {X_n} & {X_{n+1}}
	\arrow["{{{s_0}}}", shift left=3, from=1-1, to=1-2]
	\arrow["{{{t_0}}}"', shift right=3, from=1-1, to=1-2]
	\arrow["{{{p_1}}}"{description}, from=1-2, to=1-1]
	\arrow["{{{s_1}}}", shift left=3, from=1-2, to=1-3]
	\arrow["{{{t_1}}}"', shift right=3, from=1-2, to=1-3]
	\arrow["{{{p_2}}}"{description}, from=1-3, to=1-2]
	\arrow["{{{s_2}}}", shift left=3, from=1-3, to=1-4]
	\arrow["{{{t_2}}}"', shift right=3, from=1-3, to=1-4]
	\arrow["{{{p_3}}}"{description}, from=1-4, to=1-3]
	\arrow["{{{s_{n-1}}}}", shift left=3, from=1-4, to=1-5]
	\arrow["{{{t_{n-1}}}}"', shift right=3, from=1-4, to=1-5]
	\arrow["{{{p_n}}}"{description}, from=1-5, to=1-4]
	\arrow["{s_{n-1}}", shift left=3, from=1-5, to=1-6]
	\arrow["{t_n}"', shift right=3, from=1-5, to=1-6]
	\arrow["{p_{n+1}}"{description}, from=1-6, to=1-5]
\end{tikzcd}\]
satisfying the added conditions in Def. \ref{def:fcndisks}. As before, $X_1 \in \intvl$ is an interval. For each non-extreme element $i\in X_1$, an $n$-disk $\t^i X$ can be extracted from the $(n+1)$-disk $X$ by setting
$$\t^iX_0 := \{i\} \quad\textnormal{ and }\quad \t^iX_k := \p_{k+1}^{-1}(\t^iX_{k-1}) \quad\textnormal{ for }\quad  0< k \leq n$$
in the diagram below.
\[\begin{tikzcd}
	{\tau^iX_0} & {\tau^iX_1} & {\tau^iX_2} & \cdots & {\tau^iX_n} \\
	{X_1} & {X_2} & {X_3} && {X_{n+1}}
	\arrow["{{{s^i_0}}}", shift left=3, from=1-1, to=1-2]
	\arrow["{{{t^i_0}}}"', shift right=3, from=1-1, to=1-2]
	\arrow["\subseteq"{marking, allow upside down}, draw=none, from=1-1, to=2-1]
	\arrow["{{{p^i_1}}}"{description}, from=1-2, to=1-1]
	\arrow["{{{s^i_1}}}", shift left=3, from=1-2, to=1-3]
	\arrow["{{{t^i_1}}}"', shift right=3, from=1-2, to=1-3]
	\arrow["\subseteq"{marking, allow upside down}, draw=none, from=1-2, to=2-2]
	\arrow["{{{p^i_2}}}"{description}, from=1-3, to=1-2]
	\arrow["{{{s^i_2}}}", shift left=3, from=1-3, to=1-4]
	\arrow["{{{t^i_2}}}"', shift right=3, from=1-3, to=1-4]
	\arrow["\subseteq"{marking, allow upside down}, draw=none, from=1-3, to=2-3]
	\arrow["{{{p^i_3}}}"{description}, from=1-4, to=1-3]
	\arrow["{{{s^i_{n-1}}}}", shift left=3, from=1-4, to=1-5]
	\arrow["{{{t^i_{n-1}}}}"', shift right=3, from=1-4, to=1-5]
	\arrow["{{{p^i_n}}}"{description}, from=1-5, to=1-4]
	\arrow["\subseteq"{marking, allow upside down}, draw=none, from=1-5, to=2-5]
\end{tikzcd}\]
The structure maps $\p^i_k : \t^i X_k \to \t^i X_{k-1}$ and $s^i_k, t^i_k : \t^i X_k \to \t^i X_{k+1}$ are given by restricting those of $X$: $\p^i_k := \p_{k+1}|_{\t^i X_{k}}$, $s^i_k := s_{k+1}|_{\t^i X_{k}}$ and $t^i_k := t_{k+1}|_{\t^i X_{k}}$. The relations in Eqn. \eqref{eq:globulareqn} of Def. \ref{def:fcndisks} hold for these assignments because they hold for those of $X$. By definition $\#\t^iX_0 = \#\{i\} = 1$, conditions $(1)$, $(2)$ and $(3)$ in Def. \ref{def:fcndisks} are addressed as follows:
\begin{enumerate}
\item Since $s_1(x) = t_1(x) \Leftrightarrow x \in s_0(*) \cup t_0(*)$, $\Eq(s^i_0, t^i_0)\subset \Eq(s_1, t_1) = \emptyset$ because $i\in X_1$ is an not extreme point.
  
\item $\Eq(s^i_k, t^i_k) = \{ x \in \t^i X_k : s^i_k(x) = t^i_k(x) \} = \t^iX_k \cap \{ x\in X_{k+1} : s_{k+1}(x) = t_{k+1}(x) \} = \t^iX_k\cap (s_k(X_k)\cup t_k(X_k))= s^i_{k-1}(\t^iX_{k-1}) \cup t^i_{k-1}(\t^iX_{k-1})$

\item If $x\in \t^iX_{k-1} \subset X_k$ then $(\p^i_k)^{-1}(x) = \p_{k+1}^{-1}(x) = \{ s_{k}(x) \leq 1 \leq \cdots \leq t_{k}(x) \} = \{ s^i_{k-1}(x) \leq 1 \leq \cdots \leq t^i_{k-1}(x) \}$.
\end{enumerate}

Now if $f:X\to Y$ is a map of combinatorial $(n+1)$-disks then by Def. \ref{def:fcndisks} $f = \{ f_k : X_k \to Y_k \}_{k=0}^{n+1}$ is a collection of maps which commute with the structure maps $s_k$, $t_k$, $\p_k$ of $X$ and $f_k : \p_{k+1}^{-1}(x) \to \p_{k+1}^{-1}(f_{k}(x))$ preserves the linear order for all $0\leq k \leq n+1$ and for all $x \in X_{k}$. So that for each $i\in \omega(X_1)$ such that $f_1(i) = j\in \omega(Y_1)$, there is a restriction 
$\t^{ji}f := \{ f_k|_{\t^i X}\}_{k=0}^n : \t^iX \to \t^j Y$.
 This restriction is functorial, if $f : X\to Y$ and $g:Y\to Z$ are maps of $(n+1)$-disks such that $f_1(i)=j\in \omega(Y_1)$ and $g_1(j)=k\in \omega(Z_1)$  then $\t^{ki}(gf)=\t^{kj}(g)\t^{ji}(f)$.

So we conclude that Eqn. \eqref{eq:inductioniso} defines a functor $\Phi:\cal{D}_{n+1}\to \intvl\otimes^\Md\cal{D}_n$ which assigns to $(n+1)$-disks $X$, $\Phi(X) := (X_1, a)$ and maps $f : X\to Y$ between $(n+1)$-disks $\Phi(f) := (\Phi(f)_0, \{\Phi(f)_{ji}\})$ where $\Phi(f)_0 := f_1$ and $\Phi(f)_{ji} := \t^{ji}f$.

To show that $\Phi$ is an equivalence of categories, we prove (1) $\Phi$ is fully faithful and (2) $\Phi$ is essentially surjective.
\begin{enumerate}
\item $\Phi$ is fully faithful. There are mutually inverse maps
  $$ \alpha : \Hom_{\cal{D}_{n+1}}(X,Y)\leftrightarrows \Hom_{\intvl\otimes^{\Md} \cal{D}_n}((X_1,a),(Y_1,b)): \beta$$
between sets of morphisms. For $\alpha$, $\alpha(f) := \Phi(f) = (\Phi(f)_0, \Phi(f)_{ji})= (f_1, \{ \t^{ji} f\})$ as discussed above. For $\beta$, if $g : (X_1,a) \to (Y_1, b)$ then $g = (g_0, \{ g_{ji}\})$, so $\beta(g) : X\to Y$ is defined by $\beta(g) := \{\beta(g)_k :X_k\to Y_k\}_{k=0}^{n+1}$ where $\beta(g)_0(*) := *$, $\beta(g)_1 := g_0$ and, for $k\geq 2$, the map $\beta(g)_k : X_k \to Y_k$ is defined by $\beta(g)_k(x) := (g_{ji})_{k-1}(x)$ for $x\in \t^iX_{k-1}$ (since $X_k = \sqcup_{i\in \omega(X_1)} \t^iX_{k-1}$).

On one hand, the composition 
$\alpha\beta(g) = \alpha(1_*, \beta(g)_0, \{\beta(g)_k\}_{k=2}^\infty) = (g_1, \{ \tau^{ji} \beta(g)_k\}) = g$ so that $\alpha\beta=1$.
On the other hand, the composition $\beta\alpha(f) = \beta\Phi(f) = \beta(f_0, \{\t^{ji} f\}) = (1_*, \beta(\Phi(f))_1, \{\beta(\Phi(f))_k\}_{k=2}^\infty)$ where $\beta(\Phi(f))_1 = f_1$ and $\beta(\Phi(f))_k(x) = (\t^{ji} f)_{k-1}(x) = f_k(x)$ when $x\in \t^iX_{k-1}$, again since $X_k = \sqcup_{i\in \omega(X_1)} \t^iX_{k-1}$, $\beta(\Phi(f))_k = f_k$ so that $\beta\alpha = 1$.

\item $\Phi$ is essentially surjective. Suppose that
  $X\in\cal{D}_{n+1}$ is an $(n+1)$-disk. Then there is an equivalence
  $\phi : X_1\xto{\sim} [\ell]$ for some $\ell \in \mathbb{Z}_{\geq 0}$.  By setting $\phi_0 := \phi$ and $\phi_{\phi(i)i} := 1_{\t^iX} : \t^i X\to \t^{\phi(i)} X$, this extends to an equivalence $\tilde{\phi} := (\phi_0,\phi_{ji}) : (X_1,a) \xto{\sim} ([\ell],\tilde{a})$ where $\tilde{a}(i) := \t^i X$ for $i\in \omega([\ell]) = \{ 1,2,\ldots, \ell-1\}$.
\end{enumerate}
\end{proof}


\section{Berger-Joyal duality}\label{sec:bjgenwreath}

With the notation for both the wreath product $X\wr A = X\otimes^{\M} A$ and the cowreath product $Y \otimes^{\Md} B$ in mind, the statement of Prop. \ref{prop:Duality} becomes the corollary below.

\begin{cor}\label{prop:wreath_duality}
    For $X_\gamma\in \cat{Cat}_{/\G}$ and any category $A$, there is a natural isomorphism
    \begin{align*}
        (X_\gamma \otimes^{\M} A)^\op \cong X^\op_{\gamma^\op} \otimes^\Md A^\op
    \end{align*}
\end{cor}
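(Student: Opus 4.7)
The plan is to obtain the corollary as a direct specialization of Prop. \ref{prop:Duality}. Take the ambient 2-category to be $\cal{C} = \cat{Cat}$, the functor $T$ to be $\M$, and the involution $\tau$ to be $(-)^{\op} : \cat{Cat}\to \cat{Cat}$. The latter is genuinely an automorphism of $\cat{Cat}$ of order two, which is exactly the hypothesis required by Prop. \ref{prop:Duality}.

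With these choices, the conjugate $T^{\tau} = \tau T \tau^{-1}$ unfolds to $C\mapsto (\M(C^\op))^\op$, which by Eqn. \eqref{eq:dualfun} is the definition of $\Md$. Thus $T^\tau = \Md$. One also has $\tau T(*) = (\M(*))^\op \cong \G^\op = \Md(*)$ by Prop. \ref{prop:Mprops}(1) and Prop. \ref{def:sigma}(2), so applying $\tau$ to a slice object $X_\gamma \in \cat{Cat}_{/\G}$ produces an object $X^\op_{\gamma^\op} \in \cat{Cat}_{/\G^\op} = \cat{Cat}_{/\Md(*)}$, matching the notation on the right-hand side of the corollary.

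Plugging these identifications into the conclusion of Prop. \ref{prop:Duality},
$$\tau(A_r\otimes^T B)\cong \tau(A_r)\otimes^{T^\tau}\tau(B),$$
becomes precisely
$$(X_\gamma \otimes^{\M} A)^{\op} \cong X^{\op}_{\gamma^{\op}} \otimes^{\Md} A^{\op},$$
and naturality in $X_\gamma$ and $A$ is inherited from the naturality already established in Prop. \ref{prop:Duality}.

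No real obstacle arises here: the content of the statement was absorbed into the abstract framework of Prop. \ref{prop:Duality}, and the only task is bookkeeping, namely verifying that the conjugate $(-)^{\op}\circ \M\circ (-)^{\op}$ is $\Md$ and that the slice data transforms as indicated. The mild point worth flagging is that the pullback defining $\otimes^{\M}$ is preserved by $(-)^{\op}$ only up to natural isomorphism (one must choose a model, as in Rmk. \ref{rem:Stability}(i)), but this is exactly the flexibility already built into Prop. \ref{prop:Duality}.
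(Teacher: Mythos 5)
Your proposal is correct and matches the paper's approach exactly: the paper presents this corollary as nothing more than the specialization of Prop.~\ref{prop:Duality} to $\cal{C}=\cat{Cat}$, $T=\M$, and $\tau=(-)^{\op}$, with $\M^{\tau}=\Md$ by Eqn.~\eqref{eq:dualfun}. Your bookkeeping of the slice data and the remark about pullbacks only being preserved up to isomorphism are both consistent with what the paper intends.
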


We now have everything that we need to reprove Berger-Joyal duality.

\begin{thm}\label{thm:bjduality}
    For each $n\in\mathbb{Z}_{\geq 1}$, there is equivalence $\Theta_n^\op\simeq \cal{D}_n$.
\end{thm}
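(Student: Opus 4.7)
The plan is straightforward induction on $n$, assembling the pieces already developed in the paper into the chain of equivalences previewed in the introduction. The base case $n=1$ is immediate: Theorem \ref{thm:diskaswreath} (Eqn. \eqref{eq:1daw}) gives $\cal{D}_1\simeq \intvl$, and Prop. \ref{prop:n1duality} gives $\intvl\cong \simpl^{\op}=\Theta_1^{\op}$, so composing produces $\Theta_1^{\op}\simeq \cal{D}_1$.

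For the inductive step, assume $\Theta_n^{\op}\simeq \cal{D}_n$. I would write out the following chain, justifying each step by a specific earlier result:
\begin{align*}
\Theta_{n+1}^{\op} &= (\simpl\wr\Theta_n)^{\op} && \text{Def. \ref{def:thetan}}\\
&= (\simpl\otimes^{\M}\Theta_n)^{\op} && \text{Def. \ref{def:wrprod}}\\
&\cong \simpl^{\op}\otimes^{\Md}\Theta_n^{\op} && \text{Cor. \ref{prop:wreath_duality}}\\
&\simeq \intvl\otimes^{\Md}\cal{D}_n && \text{Prop. \ref{prop:n1duality}, inductive hypothesis}\\
&\simeq \cal{D}_{n+1} && \text{Thm. \ref{thm:diskaswreath}, Eqn. \eqref{eq:2daw}.}
\end{align*}
The first two lines are definitional. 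The third line is exactly the statement of Cor. \ref{prop:wreath_duality}, applied to $X=\simpl$ with its Segal map $\g:\simpl\to \G$; after passing to opposites, this Segal map becomes the coSegal map $\g^{\op}:\simpl^{\op}\to \Gd$ of Prop. \ref{def:cosegalmap}, so the cowreath $\simpl^{\op}\otimes^{\Md}\Theta_n^{\op}$ on the right is well-defined with the intended structure.

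The only step that requires real care is the fourth line, where two separate equivalences are substituted into a cowreath product: $\simpl^{\op}\xrightarrow{\sim}\intvl$ on the left factor and $\Theta_n^{\op}\xrightarrow{\sim}\cal{D}_n$ on the right factor. This is precisely the situation flagged in Rmk. \ref{rem:Stability} (iii): the $\Md$-wreath product does not a priori preserve equivalences in either slot. The justification I would give is that Prop. \ref{def:sigma} (3) shows $\Md(!_{\cal{D}_n}):\Md(\cal{D}_n)\to \Md(*)=\Gd$ is an isofibration, so the defining pullback is a homotopy pullback and both substitutions descend to an equivalence of pullbacks, exactly as promised in Rmk. \ref{rem:Stability} (iii). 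Once that stability point is pinned down, the final equivalence is Eqn. \eqref{eq:2daw} of Thm. \ref{thm:diskaswreath} applied with $\cal{D}_n$ in the right slot, and the induction closes.

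The main obstacle, then, is not conceptual but bookkeeping: one must confirm that the coSegal map on $\intvl$ arising from transporting $\g:\simpl\to \G$ across $\simpl^{\op}\cong \intvl$ agrees (up to the identifications of Prop. \ref{def:cosegalmap}) with the coSegal map used in Eqn. \eqref{eq:2daw} of Thm. \ref{thm:diskaswreath}; otherwise the two instances of $\intvl\otimes^{\Md}\cal{D}_n$ appearing in lines four and five would be different categories. This compatibility is exactly what Prop. \ref{def:cosegalmap} was set up to record, so I would cite it explicitly at the fourth line to make the transport unambiguous.
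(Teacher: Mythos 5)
Your proposal is correct and follows essentially the same inductive argument as the paper: the same base case via Thm.~\ref{thm:diskaswreath} and Prop.~\ref{prop:n1duality}, and the same chain $(\simpl\wr\Theta_n)^\op\cong\simpl^\op\otimes^{\Md}\Theta_n^\op\simeq\intvl\otimes^{\Md}\cal{D}_n\simeq\cal{D}_{n+1}$ justified by Cor.~\ref{prop:wreath_duality}, the inductive hypothesis, Prop.~\ref{def:cosegalmap} with Rmk.~\ref{rem:Stability}(iii), and Thm.~\ref{thm:diskaswreath}. The only cosmetic difference is that you perform the two substitutions (left slot $\simpl^\op\simeq\intvl$ and right slot $\Theta_n^\op\simeq\cal{D}_n$) in a single step where the paper separates them, and your discussion of the isofibration condition making these substitutions legitimate matches the paper's citation of Rmk.~\ref{rem:Stability}(iii).
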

\begin{proof}
    The proof is by induction. When $n=1$, there is an equivalence $\cal{D}_1\simeq \intvl$ by Thm. \ref{thm:diskaswreath} so that $\cal{D}_1\simeq \simpl^{\op}$ by Prop. \ref{prop:n1duality}. Now assuming that $\Theta_n^\op\simeq \cal{D}_n$,
    \begin{align*}
        \Theta_{n+1}^\op &=(\simpl\wr \Theta_n)^\op \tag{Def. \ref{def:thetan}} \\
        &\cong \simpl_{\gamma^\op}^\op \otimes^\Md \Theta_n^\op \tag{Cor. \ref{prop:wreath_duality}} \\
        &\simeq \simpl_{\gamma^\op}^\op \otimes^\Md \cal{D}_n  \tag{Induction} \\
        &\simeq \intvl_{\omega} \otimes^\Md \cal{D}_n \tag{Prop. \ref{def:cosegalmap}, Rmk. \ref{rem:Stability}(iii)} \\
        &\simeq \cal{D}_{n+1} \tag{Thm. \ref{thm:diskaswreath}}
    \end{align*}
\end{proof}

Other perspectives on Berger-Joyal duality can be found in the references \cite{DisksOmegaDuality} and \cite{Oury2010}.

\newcommand{\Gp}{G}
\newcommand{\DG}{\Delta \Gp}
\newcommand{\gG}{\g_\Gp}
\newcommand{\Cyc}{\Lambda}

\section{Segal functors for locally finite categories}\label{sec:chardefsegal}

In order to apply the Berger-Joyal duality Theorem \ref{thm:bjduality} in
new settings, it is necessary to view categories $C$ as categories $C\to \G$
over $\G$. 
Thm. \ref{thm:sievesondelta} classifies analogues of Berger's Segal functor so that when $C$ is arbitrary, this allows us to motivate the introduction of other Segal functors.
In the second part of this section we will give some examples of Segal
functors for crossed simplicial groups $\DG$.  A crossed simplicial group
$\DG$ is a category $C$ which a extension of $\Delta$ by a collection of groups.

\subsection{A characterization of Berger's Segal functor}\label{sec:berger}

We would like to understand Segal functors $C\to \Gamma$ and our problem is
that there are too many of them.  Recall that a {\em sieve} $S$ of an object
$x\in C$ is a subfunctor $S \subseteq Y_x$ of the Yoneda functor 
\begin{equation}\label{yonedaeq}
  Y_x(y) := \Hom_C(y,x).
  \end{equation}
If $C$ is locally finite then every sieve $S$ determines a functor $S^\op : C\to \Gamma$
because $S : C^\op \to \FinSet$.  This does not use the added basepoint $*$ in
a non-trivial way (as in Berger's Def. \ref{def:segalmap}). What we really
want is Segal functors which are ``like Berger's Segal functor.'' To achieve
this Prop. \ref{prop:classicsegal} constructs Berger's Segal functor from a
sieve and Thm. \ref{thm:sievesondelta} classifies sieves on $\Delta$. Combining
these results shows that Berger's Segal functor arises from the largest proper sieve.

\begin{const}\label{construction}
If $F:C\to \FinSet$ is a functor and $S \subseteq F$ a subfunctor then there is a quotient functor $F/S : C\to \FinSet_*$ defined as follows.

The functor $F/S:C\to \FinSet_*$ is defined on objects $x\in C$ by $(F/S)(x) := \left(F(x)\backslash S(x)\right) \sqcup\{*\}$.  If $f:x\to y$ is a map in $C$ then $(F/S)(f):(F/S)(x)\to (F/S)(y)$  is the map given by 
$$(F/S)(f)(t) := F(f)(t) \quad\textnormal{ when }\quad t\in F(x) \textnormal{ and } F(f)(t)\in (F/S)(y)$$ 
and $(F/S)(f)(t):=*$ otherwise.
\end{const}

    For a sieve $S$ on an object $x\in C$, this construction gives a functor $Y_x/S:C^\op\to \FinSet_*$ and so determines a functor $(Y_x/S)^\op : C\to \Gamma$ as discussed above.

Next Prop. \ref{prop:classicsegal} checks that Berger's Segal functor
from Def. \ref{def:segalmap} is obtained by the construction in
Const. \ref{construction}.

In order to state the proposition below we compose Berger's Segal functor $\gamma : \Delta \to\G$ with the equivalence $P : \Gamma \xto{\sim} \FinSet_*^{\op}$  from Def. \ref{def:segal} above, this gives a functor
\begin{equation}\label{eq:gammaprime}
  \gamma' := \gamma\circ P : \Delta \to \FinSet_*^{\op}.
  \end{equation}
on objects $[n]$, $\gamma'([n]) = E([n])_+$ consists of the edges of $[n]$ and the basepoint $*$. For a map $f : [n]\to [m]$ in $\Delta$, $\gamma'(f) : \gamma'([m]) \to \gamma'([n])$ maps each $t\in E(f(e))$ to $e\in E([n])_+$ for each edge $e\in E([n])$.

\begin{prop}\label{prop:classicsegal}
If $S\subseteq Y_{[1]}$ is the sieve on $[1]\in \Delta$ consisting of constant functions (or non-surjective functions) then there is an isomorphism
$$Y_{[1]}/S \xto{\sim} \gamma'$$ 
between the quotient construction above and Berger's Segal functor $\gamma'$ from Eqn. \eqref{eq:gammaprime}.
\end{prop}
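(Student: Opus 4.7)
The plan is to produce, at each $[n]\in\simpl$, a pointed bijection $\phi_{[n]}:(Y_{[1]}/S)([n])\to \gamma'([n])$ and then verify naturality in $[n]$. Since $\gamma'=\gamma\circ P$ is a functor $\simpl\to \FinSet_*^{\op}$, both sides can be regarded as functors $\simpl^{\op}\to \FinSet_*$, so a natural family of pointed bijections $\{\phi_{[n]}\}$ is precisely what is needed.

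For the objectwise identification, I will use that an order-preserving map $g:[n]\to [1]$ is determined by its threshold $k_g\in\{0,1,\ldots,n+1\}$ with $g^{-1}(0)=\{j<k_g\}$, and that $g$ is constant (equivalently, non-surjective, since the codomain is $[1]$) exactly when $k_g\in\{0,n+1\}$. The $n$ non-constant maps are therefore in bijection with the $n$ edges $e_i:i-1\to i$ of $[n]$ via $g\mapsto e_{k_g}$. Extending by $*\mapsto *$ gives a pointed bijection $\phi_{[n]}:(Y_{[1]}/S)([n])\xto{\sim} \gamma'([n]) = E([n])_+$.

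For naturality, fix $f:[n]\to [m]$ in $\simpl$. I need $\phi_{[n]}\circ (Y_{[1]}/S)(f) = \gamma'(f)\circ \phi_{[m]}$ as pointed maps $\gamma'([m])\to \gamma'([n])$. It suffices to evaluate on an arbitrary edge $e_j\in E([m])$ and its corresponding map $g:[m]\to [1]$ of threshold $j$. By Construction \ref{construction}, $(Y_{[1]}/S)(f)$ sends $g$ to $g\circ f$ when the composite is non-constant and to $*$ otherwise; moreover $g\circ f$ has threshold $i\in\{1,\ldots,n\}$ precisely when $f(i-1) < j \leq f(i)$. On the other side, by Def. \ref{def:segalmap} one has $e_j\in E(f(e_i))=\{e_{f(i-1)+1},\ldots,e_{f(i)}\}$ under exactly the same inequality, and $\gamma'(f)(e_j)=*$ when no such $i$ exists. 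The two assignments therefore agree on every $e_j$, establishing naturality.

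The main subtlety is bookkeeping around variance and basepoints: both functors must be viewed consistently as $\simpl^{\op}\to\FinSet_*$, and the ``collapse to $*$'' conditions on the two sides must be cross-checked. Once that is pinned down, the substance of the proof reduces to one combinatorial identity --- the set of edges of $[m]$ lying in the image $f(e_i)$ of a given edge of $[n]$ equals the set of thresholds $j$ for which $g\circ f$ has threshold $i$. Pointwise bijectivity together with this naturality deliver the required isomorphism of functors $Y_{[1]}/S\xto{\sim}\gamma'$.
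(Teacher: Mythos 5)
Your proof is correct and follows essentially the same route as the paper's: both identify the non-basepoint elements of $(Y_{[1]}/S)([n])$ with the edges of $[n]$ via the unique surjection $[n]\to[1]$ sending a given edge onto $e_1$ (your ``threshold'' $k_g$ is exactly this correspondence), and then check naturality directly against Def.~\ref{def:segalmap}. The only differences are cosmetic --- you orient the isomorphism as $Y_{[1]}/S\to\gamma'$ rather than the paper's $\gamma'\to Y_{[1]}/S$, and your naturality verification is spelled out in more detail than the paper's rather terse one.
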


\begin{proof}
In order to prove the proposition we construct a natural transformation 
$$p = \left\{ p : (Y_{[1]}/S)([n]) \xto{\sim} \gamma'([n]) \right\}_{[n]\in \ob(\Delta)}$$
Since there is one edge $e_1\in \gamma([1])$ and for each edge $e\in \gamma([n])$, there is a unique surjection $p_e:[n]\to [1]$ such that $p_e(e):=e_1$,
we introduce maps, $p : \gamma'([n])\to (Y_{[1]}/S)([n])$  where 
$$ p(e) := p_e \quad\quad\textnormal{ and }\quad\quad p(*) := *.$$
Each map $p$ is a bijection because
the right-hand side consists of non-constant maps $[n]\to[1]$, $(Y_{[1]}/S)([n]) = \Hom_\Delta([n],[1])\backslash S([n])$
by Construction \ref{construction} and every surjective map $[n]\to [1]$ in the category $\Delta$ is equal to $p_e$ for some $e\in\gamma([n])$.

To see that $p$ is natural, for each map $f:[n]\to [m]$ in $\Delta$, we claim that the diagram below commutes, i.e. $f^*\circ p = p \circ \gamma'(f)$.
\[\begin{tikzcd}
	{\gamma'([m])} & {(Y_{[1]}/S)([m])} \\
	{\gamma'([n])} & {(Y_{[1]}/S)([n])}
	\arrow["p", from=1-1, to=1-2]
	\arrow["{\gamma'(f)}"', from=1-1, to=2-1]
	\arrow["{f^*}", from=1-2, to=2-2]
	\arrow["p"', from=2-1, to=2-2]
\end{tikzcd}\]
On one hand, if $\gamma'(t) = e$ then $p\gamma'(t) = p_e$. On the other hand, $p(t) = p_t : [m] \to [1]$ and $f^*(p(t)) = p_t : [n] \xto{f} [m] \to [1]$.
\end{proof}

As noted above, any sieve $S\subseteq Y_{[1]}$ determines a Segal functor $(Y_{[1]}/S)^{op} : \Delta \to \G$. The theorem below shows that sieves of the form $S\subseteq Y_{[n]}$  are determined by their images.

\begin{thm}\label{thm:sievesondelta}
Let $\mathcal{I} := \{ S \subseteq Y_{[n]} : S \text{ is a sieve on } [n] \}$ and let 
    \begin{align*}
        \mathcal{S}:=\{S\subs \mathcal{P}([n]): \left(A\in S \text{ and } B\subs A\right) \Rightarrow B\in S\}
    \end{align*}
    be the set of subsets of $[n] = \{0,1,\ldots,n\}$ which are closed under subsets. Then there are mutually inverse maps
    $$\Phi : \mathcal{S} \leftrightarrows \mathcal{I} : \Psi$$
which are determined by the assignments
\begin{align*}
  \Phi(S)([k]) :=\{f\in \Hom_{\Delta}([k],[n]):\im(f) \in S\} \text{ and } \\
  \Psi(I) := \{ S : S = \im(f) \text{ for some } f\in I([k]) \}.
  \end{align*}
\end{thm}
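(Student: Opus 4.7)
\medskip

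\noindent\textbf{Proof plan.} The plan is to verify that $\Phi$ and $\Psi$ are well defined and then check they are mutually inverse. The key fact threading through all four verifications is that every map $f : [k]\to [n]$ in $\simpl$ admits a canonical epi-mono factorization $f = \iota_f \circ \pi_f$, where $\pi_f : [k]\twoheadrightarrow [p]$ is a surjection and $\iota_f : [p]\hookrightarrow [n]$ is the order-preserving inclusion of $\im(f)$, and that every surjection in $\simpl$ has a section (pick least preimages).

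\medskip

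\noindent\textbf{Step 1 ($\Phi$ lands in $\mathcal{I}$).} Given $S\in\mathcal{S}$ and a map $g : [j]\to [k]$, I need to show $g^* \Phi(S)([k]) \subseteq \Phi(S)([j])$. If $f\in\Phi(S)([k])$ then $\im(fg)\subseteq\im(f)\in S$, so the subset-closure of $S$ gives $\im(fg)\in S$, i.e.\ $fg\in\Phi(S)([j])$.

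\medskip

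\noindent\textbf{Step 2 ($\Psi$ lands in $\mathcal{S}$).} Given $I\in\mathcal{I}$, suppose $A\in\Psi(I)$, say $A = \im(f)$ for some $f\in I([k])$, and let $B\subseteq A$ with $|B|=j+1$. I would like to exhibit a map $h\in I([j])$ with $\im(h) = B$. Writing $B = \{b_0<\cdots<b_j\}$, pick the least preimage $x_s\in f^{-1}(b_s)$; since $f$ is order preserving these are ordered, so the assignment $s\mapsto x_s$ defines an order-preserving map $g : [j]\to [k]$, and then $h := fg \in I([j])$ by the sieve property of $I$, with $\im(h) = B$.

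\medskip

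\noindent\textbf{Step 3 ($\Psi\Phi = \mathrm{id}$).} Given $S\in\mathcal{S}$, every subset $A\subseteq [n]$ is the image of its order-preserving inclusion, so $\Psi(\Phi(S)) = \{\im(f) : \im(f)\in S\} = S$.

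\medskip

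\noindent\textbf{Step 4 ($\Phi\Psi = \mathrm{id}$).} This is the step I expect to be the main obstacle. Fix $I\in\mathcal{I}$. The inclusion $I([k])\subseteq\Phi(\Psi(I))([k])$ is immediate from the definitions. For the reverse inclusion, suppose $f : [k]\to [n]$ with $\im(f)\in\Psi(I)$, say $\im(f) = \im(g)$ for some $g\in I([j])$. Factor both through the inclusion $\iota : [p]\hookrightarrow [n]$ of their common image: $f = \iota\pi_f$ and $g = \iota\pi_g$ with $\pi_g : [j]\twoheadrightarrow [p]$ surjective. Pick a section $\sigma : [p]\to [j]$ of $\pi_g$; then
\begin{equation*}
g\circ(\sigma\pi_f) \;=\; \iota\pi_g\sigma\pi_f \;=\; \iota\pi_f \;=\; f,
\end{equation*}
so $f$ is a precomposition of $g\in I([j])$, hence $f\in I([k])$ by the sieve property. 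This closes the loop.

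\medskip

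The whole argument rests on the observation that sieves on $[n]$ in $\simpl$ are controlled by the images of their elements, which in turn relies on the fact that any two order-preserving surjections with the same target differ by precomposition. Step 4 is where this is used essentially; the other three steps are unpackings.
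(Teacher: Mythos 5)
Your proof is correct and follows essentially the same four-step structure as the paper's own argument (well-definedness of $\Phi$ and $\Psi$, then the two composites). The only difference is one of detail: your Step 4 supplies the epi--mono factorization and the choice of section of $\pi_g$ needed to justify the final equality $\Phi(\Psi(I))([k]) = I([k])$, which the paper asserts without elaboration, and your Step 2 is a cleaner version of the paper's construction of the map $g$ with $\im(fg)=B$.
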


\begin{proof}
The proof consists of four steps. First we show that for $S\in\mathcal{S}$, $\Phi(S)\subseteq Y_{[n]}$ is a sieve on $[n]$. Then we check that for $I\in\mathcal{I}$, $\Psi(I)\in \mathcal{S}$. Lastly, we compute that $\Psi\Phi = 1_{\cal{S}}$ and $\Phi\Psi = 1_{\cal{I}}$ respectively.

Step 1: If $S\in \mathcal{S}$ then $\Phi(S)$ is a sieve.  Since $\Phi(S)([k]) \subseteq Y_{[n]}$, suppose $f : [k]\to [\ell]$ then $\Phi(f) : \Phi(S)([\ell]) \to \Phi(S)[k]$ is pullback $g\mapsto g\circ f$. $\Phi$ is closed under pullback because if $\im(g) \in S$ and $\im(g\circ f) \subseteq \im(g)$ then $\im(g\circ f)\in S$.

Step 2: If $I\in \mathcal{I}$ then $\Psi(I)\in\mathcal{S}$.  
    Suppose that $A\in\Psi(I)$ and $B\subs A$.  Since $A\in \Psi(I)$ there is an $f : [k]\to [n]$ such that $A=\im(f)$. If $j := \#f^{-1}(B)$ then there is a map $g : [j] \to [k]$ in $\Delta$ with $\im(g) = B$ and $f\circ g \in I$ because $I$ is a sieve. So $B\in \Psi(I)$ and $\Psi(I)\in \mathcal{S}$.

Step 3: $\Psi\Phi=1_{\mathcal{S}}$.   For $S\in \cal{S}$, 
\begin{align*}
\Psi(\Phi(S)) &= \{ S : S = \im(f) \text{ for some } f\in \Phi(S)([k]) \} \\
&= \{ S : S = \im(f) \text{ for some } f: [k]\to [n] \text{ such that } \im(f) \in S \}\\
&= S
\end{align*}

Step 4: $\Phi\Psi=1_{\mathcal{I}}$.    For $I\in \cal{I}$,
    \begin{align*}
        \Phi(\Psi(I))([k])&= \{f\in \Hom_{\Delta}([k],[n]):\im(f) \in \Psi(I)\}\\
&= \{f\in \Hom_{\Delta}([k],[n]):\im(f) \in \{ S : S=\im(g) \text{ for some } g \in I([k]) \} \}\\
&= I([k])
    \end{align*}
\end{proof}

The following characterization of Berger's Segal functor $\gamma : \Delta \to\G$ from Def.  \ref{def:segalmap} above follows from combining the classification Thm. \ref{thm:sievesondelta} above with Prop \ref{prop:classicsegal}.

\begin{cor}\label{cor:berger}
    The sieve $S\subs Y_{[1]}$ consisting of constant functions $[n]\to[1]$ corresponds to Berger's Segal functor for $\Delta$. $S$ is the largest proper sieve. 
\end{cor}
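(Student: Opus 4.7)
The plan is to combine the two preceding results: the explicit matching of sieves to Berger's Segal functor in Prop. \ref{prop:classicsegal}, and the classification of sieves on $[n]$ in Thm. \ref{thm:sievesondelta}. The corollary is essentially the $n=1$ case of that classification, reinterpreted via the quotient construction. The first claim---that $S$ produces Berger's Segal functor---is literally the content of Prop. \ref{prop:classicsegal}, so no new work is needed for that half.

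For the second claim, I would apply Thm. \ref{thm:sievesondelta} with $n=1$. Under the bijection $\Phi : \mathcal{S} \leftrightarrows \mathcal{I}$, sieves on $[1]$ correspond bijectively to subsets of $\mathcal{P}([1]) = \{\emptyset, \{0\}, \{1\}, \{0,1\}\}$ closed under taking subsets. A short enumeration lists six such subsets; the unique one that omits only the top element $\{0,1\}$ is $T := \{\emptyset, \{0\}, \{1\}\}$. Computing $\Phi(T)([k]) = \{f : [k]\to [1] : \im(f)\neq [1]\}$, and noting that a non-surjective order-preserving map $[k]\to [1]$ is automatically constant, I identify $\Phi(T) = S$. (The mild subtlety, worth noting in passing, is that $\emptyset$ never appears as the image of a morphism in $\Delta$ because $[k]$ is nonempty, so the two conditions $\im(f)\in\{\{0\},\{1\}\}$ and $\im(f)\in T$ cut out the same set of maps.)

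Finally, I would argue maximality directly in $\mathcal{S}$: any element of $\mathcal{S}$ strictly containing $T$ must contain $\{0,1\}$, and then by closure under subsets must equal all of $\mathcal{P}([1])$, whose image under $\Phi$ is the improper sieve $Y_{[1]}$. Since $\Phi$ is order-preserving, transporting this back along the bijection identifies $S$ as the largest proper sieve on $[1]$. No step of this plan is expected to present a genuine obstacle; the corollary is really a bookkeeping consequence of Thm. \ref{thm:sievesondelta} and Prop. \ref{prop:classicsegal}.
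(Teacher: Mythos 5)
Your proposal is correct and follows essentially the same route as the paper: the first claim is delegated to Prop.~\ref{prop:classicsegal}, and maximality is obtained by transporting the question along the bijection of Thm.~\ref{thm:sievesondelta} to downward-closed subsets of $\mathcal{P}([1])$, where the only set strictly containing $\{\emptyset,\{0\},\{1\}\}$ is all of $\mathcal{P}([1])$. Your parenthetical about $\emptyset$ never arising as an image is a fair (and slightly more careful) observation than the paper makes, but it does not change the argument.
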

\begin{proof}
Using the previous theorem, $\Psi(S) = \{ \emptyset, \{0\}, \{1\} \}$.
The only larger sieve corresponds to $\Psi(S) \cup \{\{0,1\}\}$ which is equal to all of $\mathcal{P}([1])$ and so not proper.
  \end{proof}

\subsection{Segal functors for crossed simplicial groups}\label{sec:segalcss}

In this section we will apply some of our ideas to crossed simplicial groups $\DG$.  The categories
$\DG$  were introduced by Fiedorowicz and Loday \cite{FL} and Krasauskas \cite{Kras}. 
A crossed simplicial group is an extension of the simplicial category
$\Delta$ by a groupoid in which the composition is required be of a particularly simple form.
For us these categories constitute a concrete family of examples, at least some of which, appear in practice.

\begin{defn}{($\DG$)}\label{defn:css}
  A {\em crossed simplicial group} $\DG$ is a category equipped with a functor $i : \Delta \to \DG$  such that
  \begin{enumerate}
  \item $i$ is bijective on objects.
    \item Every map $f : [m]\to [n]$ in $\DG$ a unique factorization of the form $f = \phi g$ where $\phi$ is in the image of $i$ and $g$ is an automorphism of $[m]$. 
    \end{enumerate}
  \end{defn}

If we set $\Gp_n := \Aut_{\DG}([n])$ then the collection $\Gp := \{\Gp_n\}_{n=0}^{\infty}$ forms a  simplicial set.
A crossed simplicial group $\DG$ is an extension of the simplicial category by this collection of groups.
The unique factorization property ensures that for each map $\phi : [m]\to [n]$ in $\Delta$ and $g\in\Gp_n$ there are maps 
$$\phi^* : \Gp_n\to \Gp_m \quad\quad\text{ and }\quad\quad g_* :
\Hom_{\DG}([m],[n]) \to \Hom_{\DG}([m],[n])$$ so that the composition
$g\phi$ can be rearranged to $g_*(\phi) \phi^*(g)$. In particular, since any
two maps can be written $\phi g$ and $\phi'g'$ their composition in $\DG$ can be described by 
\begin{equation}\label{eq:cssfact}
(\phi g) \circ_{\DG} (\phi'g') = (\phi\circ_\Delta g_*(\phi')) (\phi'^*(g)\circ_{\Gp_*} g).
\end{equation}
a composition in $\Delta$ and $\Gp_*$ respectively.

\begin{ex}
 When $\Gp_n = \{1\}$ for all $n\geq 0$ the associated crossed simplicial group $\DG = \Delta$ is the simplicial category.
\end{ex}

\newcommand{\ZZ}{\mathbb{Z}}
\newcommand{\inp}[1]{\ensuremath{\langle #1 \rangle}}
\newcommand{\cyc}{\Lambda}
\begin{ex}\label{ex:cyclic_category}
Connes' cyclic category $\cyc$ is a crossed simplicial group with $\Gp_n = \ZZ/(n+1)$. If the set $[n] = \{0,1,\ldots, n\}$ is viewed as the $(n+1)$-roots of unity in the unit circle $S^1 \subset \mathbb{C}$ then a map $f : [n]\to[m]$ in $\cyc$ is a homotopy class of degree 1 map $f : (S^1,[n]) \to (S^1,[m])$.

A combinatorial definition is given using the categories $\<n\> = \{0<1<\cdots <n<0\}$ generated by the graphs below
\[\begin{tikzcd}
	1 && \ddots \\
	0 && n
	\arrow[curve={height=-12pt}, from=1-1, to=1-3]
	\arrow[curve={height=-12pt}, from=1-3, to=2-3]
	\arrow[curve={height=-12pt}, from=2-1, to=1-1]
	\arrow[curve={height=-12pt}, from=2-3, to=2-1]
\end{tikzcd}\]
If $\omega_i:i\to i$ is the unique map of degree one then a functor $F:\<n\>\to \<m\>$ is called degree one when $F(\omega_i)=\omega_{F(i)}$ for all $i\in \<n\>$.  Connes' cyclic category $\Lambda \subset \cat{Cat}$ is equivalent to the subcategory whose objects are $\<n\>$ and whose morphisms are the degree one functors.
  \end{ex}

\begin{ex}
The paracyclic crossed simplicial group $\Lambda_\infty$ has structure groups $\Gp_n = \ZZ$.
If $\cyc_\infty$ is the category consisting an object $\tilde{n} := \ZZ$ with the standard order for each non-negative integer $n$ then
$$\Hom_{\cyc_\infty}(\tilde{n},\tilde{m}) := \{ f : \ZZ\to\ZZ\,|\, f(l+m+1) = f(l) + n + 1\}$$
Alternatively, if $\ZZ = \inp{t_{n+1}}$ then $\Delta\ZZ$ has a presentation with relations
  \begin{align*}
    t_{n+1}\d_i = \d_{i-1}t_n \quad\textnormal{ for }\quad 1\leq i \leq n  &\quad\quad\textnormal{ and }\quad\quad  t_{n+1}\d_0 = \d_n,\\
    t_{n+1}\s_i = \s_{i+1}t_{n+2} \quad\textnormal{ for }\quad 1 \leq i \leq n  &\quad\quad\textnormal{ and }\quad\quad  t_{n+1}\s_0 = \s_n t^2_{n+2}.
    \end{align*}
The cyclic category is equivalent to the quotient $\cyc \cong \cyc_\infty/\inp{t_{n+1}^{n+1}=1_{[n]} : n\in\ZZ_{\geq 0}}$.
  \end{ex}

\begin{ex}\label{ex-ztwo}
  There is a crossed simplicial group $\Delta\ZZ/2$ with the same objects as $\Delta$, $\ob(\Delta \ZZ/2) := \{0<1<\cdots<n\}$ and set maps of the form
$$ f : [n]\to [m] \quad\quad\textnormal{ or }\quad\quad f^* : [n] \to [m]$$
so that
$$\Hom_{\Delta\ZZ/2}([n],[m]) = \Hom_{\Delta}([n],[m]) \sqcup \Hom_{\Delta}([n],[m])^*$$
such that $f$ preserve the order and $f^*$ preserve the opposite order. There is a special map $y_{n+1} := 1^* : [n]\to [n]$ 
generating a group $\ZZ/2$ which can be thought of as order reversing,
$$y_{n+1}\cdot\{0<1<2<\cdots<n\} := \{0>1>2>\cdots>n\} = \{n<n-1<\cdots<1<0\}.$$
On maps $\Hom_{\Delta}([n],[m])^* = \Hom_{\Delta}([n],[m])\cdot y_{n+1}$.
These generators $y_{n+1}$ satisfy relations
  \begin{align*}
    y_{n+1}\d_i = \d_{n-i}y_n \quad\quad\textnormal{ and }\quad\quad  y_{n+1}\s_i = \s_{n-i}y_{n+2}
    \end{align*}
\end{ex}

If $\DG$ is a crossed simplicial group then let 
\begin{equation}\label{eq:znyonda}
Y^G_{[n]}([k]) := \Hom_{\DG}([k],[n])\quad\quad\text { where }\quad\quad Y^G_{[n]} : \DG^{\op}\to \FinSet 
\end{equation}
be the Yoneda functor  on $[n]$ in $\DG$ (compare to Eqn. \eqref{yonedaeq}). As in \S\ref{sec:chardefsegal}, by adding basepoints, any sieve $S \subseteq Y^G_{[n]}$ 
gives a functor $Y^G_{[n]}/S : \DG \to \FinSet_*$
so that $(Y^G_{[n]}/S)^{\op} : \DG\to \G$ is a candidate Segal functor. In the proposition below
we classify sieves of $Y^G_{[n]}$ in terms of sieves on $Y_{[n]}$ since the latter is the content of Thm. \ref{thm:sievesondelta} this proposition gives a classification of sieves of $[n]$ in $\DG$.

\begin{prop}\label{prop:induce}
    Suppose that $S\subs Y_{[n]}$ is a sieve.  If $Y^G_{[n]}$ is the Yoneda functor in Eqn. \eqref{eq:znyonda} above
then  there is a sieve 
$S^{G}\subs Y^G_{[n]}$
which consisting of maps $\zeta\in \DG$ which factor as
    $\zeta=\phi g$ with $g\in \Gp_{k}$ and $\phi \in S([k])$.
Moreover, all sieves of $[n]$ in $\DG$ arise in this way.
\end{prop}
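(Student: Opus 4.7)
The plan is to exhibit $S \mapsto S^G$ as a bijection between sieves on $[n]$ in $\Delta$ and sieves of $Y^G_{[n]}$, with inverse $\Psi(T) := T \cap Y_{[n]}$, viewing $Y_{[n]} \subseteq Y^G_{[n]}$ via $i : \Delta \hookrightarrow \DG$. Establishing this bijection simultaneously confirms that $S^G$ is a sieve and that every sieve of $Y^G_{[n]}$ arises in the stated form.

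For the first direction I would check that $S^G$ is closed under precomposition in $\DG$. Given $\zeta = \phi g \in S^G([k])$ with $\phi \in S([k])$ and $g \in G_k$, and an arbitrary morphism $\xi = \phi' g' : [j] \to [k]$ in $\DG$, the composition formula \eqref{eq:cssfact} yields
\[ \zeta \circ \xi = \bigl(\phi \circ_\Delta g_*(\phi')\bigr) \cdot \bigl(\phi'^*(g) \circ g'\bigr). \]
The first factor is a composition of $\Delta$-maps containing $\phi$, hence lies in $S([j])$ because $S$ is a sieve in $\Delta$; the second factor lies in $G_j$. By uniqueness of factorization this is the canonical decomposition of $\zeta \circ \xi$, so $\zeta \circ \xi \in S^G([j])$.

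For the reverse direction I would verify that $\Psi(T) := T \cap Y_{[n]}$ is a sieve on $[n]$ in $\Delta$ -- immediate, since precomposing a $\Delta$-map by a $\Delta$-map stays in $\Delta$ and $T$ is closed under precomposition in $\DG$. Then $\Psi \circ \Phi = \mathrm{id}$: a $\Delta$-map $\phi \in S$ has canonical factorization $\phi = \phi \cdot 1_{[k]}$, and conversely any $\Delta$-map in $S^G$ must have trivial $G$-part and so already lies in $S$. Finally, $\Phi \circ \Psi = \mathrm{id}$: given $\zeta = \phi g \in T([k])$, applying the sieve property of $T$ to the automorphism $g^{-1} : [k]\to[k]$ gives $\phi = \zeta \circ g^{-1} \in T$, a $\Delta$-map, whence $\phi \in \Psi(T)$ and $\zeta = \phi g \in \Psi(T)^G$; conversely any $\phi h \in \Psi(T)^G$ with $\phi \in \Psi(T) \subseteq T$ and $h \in G_k$ lies in $T$ by the sieve axiom.

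The principal obstacle is the careful use of the crossed simplicial composition formula \eqref{eq:cssfact} together with unique factorization: at each step one must identify the $\Delta$-part and the $G$-part of a composite in order to apply the sieve axiom on $\Delta$ and the closure under automorphisms separately. Once this bookkeeping is handled, the bijection -- and with it both the sieve property of $S^G$ and the exhaustiveness claim -- falls out directly.
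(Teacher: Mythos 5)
Your proposal is correct and follows essentially the same route as the paper: closure of $S^G$ under precomposition via the factorization formula \eqref{eq:cssfact}, and, for exhaustiveness, restricting a sieve $T$ to its $\Delta$-part and recovering $T$ by applying the sieve axiom to the automorphism $g$ (the paper proves the two inclusions $ (T^\Delta)^G\subseteq T$ and $T\subseteq (T^\Delta)^G$ rather than phrasing it as a bijection, but the content is identical). Your explicit check that $\Psi\Phi=\mathrm{id}$ via unique factorization is a small addition the paper leaves implicit.
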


\begin{proof}
First we prove that $S^G$ is a sieve. It suffices to show that if $\xi : [\ell] \to [k]$ is a map $\xi \in \DG$ then the image of $\xi^* : S^G([k]) \to Y^G([\ell])$ is contained in the subset $S^G([\ell])$.
Fix a map $\xi:[\ell]\to [k]$ in $\DG$ and let $\zeta\in S^{G}([k])$.  
The map $\xi$ factors as $\xi=\psi h$ and the map $\zeta$ factors as $\zeta = \phi g$. 
Now, as in Eqn. \eqref{eq:cssfact}, the pullback factors as 
$$\xi^*(\zeta) = \zeta\xi = (\phi\circ g_*\psi)\circ (\psi^*g\circ h).$$
Since $S$ is a sieve on $[n]$ implies that $\phi\circ g_*\psi\in S([\ell])$. So by definition, $\zeta\xi\in S^{G}$.

Conversely, suppose that $T^G \subs Y^G_{[n]}$ is an arbitrary sieve.  Define
$T^\Delta$ to be those elements in $T$ which are morphisms in $\Delta$.  As
$i : \Delta\to \DG$ is faithful, $T^\Delta$ is a sieve on $[n]$ in
$\Delta$.

We claim that $(T^\Delta)^G\subs T^{G}$.  To see this, fix $f\in (T^\Delta)^G$.  Factor $f=\phi g$ with $g$ an automorphisms and $\phi$ in $\Delta$.  By definition, $\phi\in T^\Delta\subs T^G$.  Thus, since $T^G$ is a sieve, we have $f=\phi g\in T^G$.

On the other hand, since $(T^\Delta)^G$ is a sieve, we have $T^{G}\subs (T^\Delta)^G$.
Since $T^G = (T^\Delta)^G$ all sieves $\DG$ arise in this way.
\end{proof}

\begin{rem}
If $S$ is a sieve on $[n]\in \Delta$ then $S^G \cong i_!S$.  If $S^G$ is a
sieve on $[n]\in \DG$ then $S^\Delta$ is not $i^*S^G$. 
\end{rem}

\begin{ex}\label{ex:segal-functors-from-sieves}
  Recall from Corollary \ref{cor:berger} that Berger's sieve is the largest proper sieve.
  \begin{enumerate}
  \item There is a Segal functor on the constant $\ZZ/2$ crossed simplicial category $\g_{\Delta\mathbb{Z}/2} : \Delta\ZZ/2 \to \G$ which is defined by the quotient $\g_{\Delta\ZZ/2}^{op} := Y^{\ZZ/2}_{[1]}/B$ where $B$ consists of constant maps $\{0\}$ and $\{1\}$.
  \item There is a Segal functor on the cyclic category $\g_\Lambda : \Lambda \to \G$ which is defined by the quotient $\g_{\Lambda}^{op} := Y_{\inp{0}}/B$. In this case, there is only one vertex and $B=\emptyset$.

    \end{enumerate}
  \end{ex}

\begin{rem}
Example \ref{ex:segal-functors-from-sieves} showcases an interesting phenomenon.  Following Ex. \ref{ex-ztwo} $\Delta \Z/2$ consists of two directed graphs the Segal functor $\g_{\Delta\ZZ/2}$ in (1) above extends $\gamma:\Delta\to \Gamma$  by using these extra edges.
\end{rem}

\begin{rem}
The opposite of (2) agrees with a cyclic analogue of the Segal functor for $\Delta$ in Def. \ref{def:segalmap}. Let $E(\<n\>)$ be the edges of the graph that generates $\<n\>$ pictured above.  If $\phi\in\<n\>$ is a map then write $E(\phi)\subseteq E(\<n\>)$ for the set of elements whose composite is $\phi$. The functor $\gamma_{\Lambda}:\Lambda\to \Gamma$ is $\gamma_\Lambda(\<n\>):=E(\<n\>)$ on objects.  For a map $f:\<n\>\to \<m\>$ and $e\in E(\<n\>)$, set $\gamma_{\Lambda}(f)(e):=E(f(e))$.
\end{rem}

\begin{rem}
In a different direction, for any diagram $i : \Delta\to C$ then we could define the Segal functor $\gamma_C : C\to \G$ 
to be the left Kan extension $\gamma_C:=\text{Lan}_i\gamma$.  It can be shown that this also agrees with the Segal functor for $\Lambda$ in (2) above.
\end{rem}

Once a Segal functor $\gG$ has been chosen, there is a Berger-Joyal duality for $n$-fold products of crossed simplicial groups.

\begin{cor}
  If $\DG_1, \DG_2,\ldots, \DG_n$ are crossed simplicial groups equipped with functors $\DG_i\to \Gamma$ then there is an equivalence of categories
  $$\DG_1 \otimes^\M \DG_2\otimes^{\M} \cdots \otimes^\M \DG_n \cong \DG_1^{\op}\otimes^{\Md} \DG_2^{\op} \otimes^{\Md} \cdots \otimes^{\Md} \DG_n^\op$$
  \end{cor}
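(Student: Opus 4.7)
The plan is to prove the statement by induction on $n$, iterating Corollary \ref{prop:wreath_duality} once at each step; this is the same mechanism that drives the inductive calculation in the proof of Theorem \ref{thm:bjduality}, generalized to handle an arbitrary family of crossed simplicial groups.

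For the base case $n=1$, after applying $-^{\op}$ to the appropriate side the statement reduces to $\DG_1 \cong \DG_1$. For the inductive step, I would parse the wreath product right-associatively as
\[
\DG_1 \otimes^\M \DG_2 \otimes^\M \cdots \otimes^\M \DG_n \;=\; \DG_1 \otimes^\M W_{n-1}, \qquad W_{n-1} := \DG_2 \otimes^\M \cdots \otimes^\M \DG_n,
\]
regarded simply as an object of $\cat{Cat}$ in its rightmost slot (so no Segal map on $W_{n-1}$ is needed for the outer product). Applying Corollary \ref{prop:wreath_duality} at the outer level gives
\[
(\DG_1 \otimes^\M W_{n-1})^{\op} \cong \DG_1^{\op} \otimes^\Md W_{n-1}^{\op},
\]
and the inductive hypothesis then identifies $W_{n-1}^{\op}$ with $\DG_2^{\op} \otimes^\Md \cdots \otimes^\Md \DG_n^{\op}$, completing the step.

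The argument is essentially formal and I do not expect any serious obstacle; the only mild subtlety is a parsing convention. Because the wreath product $\otimes^\M$ requires a Segal map on its \emph{left} argument, the iterated expression must be bracketed so that each individual $\DG_i$ appears as the leftmost factor of some sub-wreath-product exactly once, and under right-association this happens automatically. The analogous remark applies on the cowreath side. Granted this, the corollary follows from $n-1$ successive applications of Corollary \ref{prop:wreath_duality}.
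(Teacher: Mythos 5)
Your argument is correct and is precisely the one the paper intends: the corollary is stated without proof, and the evident route is the one you take --- right-associate the iterated wreath product and apply Corollary \ref{prop:wreath_duality} once per step, exactly as in the inductive computation proving Theorem \ref{thm:bjduality}, with the given Segal functors $\DG_i\to\G$ supplying the left-slot data at each level and their opposites supplying the coSegal data on the dual side. One remark worth recording: as printed the corollary omits a $(-)^{\op}$ on the left-hand side (for $n=1$ it would assert $\DG_1\cong\DG_1^{\op}$, which is false in general); what you prove, and what must be meant, is $(\DG_1\otimes^{\M}\cdots\otimes^{\M}\DG_n)^{\op}\cong\DG_1^{\op}\otimes^{\Md}\cdots\otimes^{\Md}\DG_n^{\op}$, and your base-case comment shows you have already made this correction.
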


\bibliographystyle{alpha}
\bibliography{Split1}

\end{document}